\documentclass{article}
\usepackage{mynotes, mathtools,hyperref, indentfirst}
\usepackage[
    backend=biber, 
    style=alphabetic,
    url=false,    
    doi=true,     
    eprint=true   
]{biblatex}
\title{Whittaker Functionals for Exceptional Representations}
\author{Sihan Li}
\date{}
\begin{document}
\maketitle
\begin{abstract}
In the study of automorphic forms on covering groups, Kazhdan-Patterson gives a formula that 
determines when the Whittaker functionals of induced representations can be descended to exceptional
representations.  
In this paper, first, following the original approach given by Kazhdan-Patterson and applying the block-compatible 
2-cocycle of Banks-Levy-Sepanski, we fill in several details omitted in the original proof. 
Secondly, we compare the formula of Kazhdan-Patterson with that of McNamara and Gao, which is formulated in a more general setting.
\end{abstract}

\tableofcontents
\newpage 
\section{Introduction}
\subsection{Background} 

In the representation theory of reductive $p$-adic groups, Whittaker models serve as the foundation for the construction of local $L$-functions.
Let $\mathbb{G}$ be a quasi-split reductive group over a non-Archimedean local field $F$, and $G$ be its $F$-points $\mathbb{G}(F)$,
it is well-known that for an irreducible admissible $G$-representation $\pi$ and a generic additive character $\psi$ on $F$,
the space of generic $\psi$-Whittaker functionals on $\pi$ is at most one-dimensional.
This crucial fact is usually referred to as ``(local) multiplicity one theorem'',
which plays an important role in the study of automorphic $L$-functions. 

Things become more complicated for covering groups of $G$. For example, let the group of $n$-th roots of unity $\mu_n \subseteq F$, when $\mathbb{G}=\mathrm{GL}_r$, 
the uniqueness of Whittaker models usually
fails for genuine representations of its (twisted) Kazhdan-Patterson covering group $\widetilde{G}^{(c)}$ over $F$, with $c \in \mathbb{Z} / n \mathbb{Z}$. 
It is a basic problem to study 
the dimension of the space of Whittaker functionals $\mathrm{Wh}(V)$ associated with an admissible $\widetilde{G}^{(c)}$-representation $(\pi,V)$. 

In the study of representation theory of $\widetilde{G}^{(c)}$, a class of representations called exceptional representations are of great importance, 
denoted by $V_0(\omega^\prime)$, where $\omega^\prime$ is a character on a subgroup of $\widetilde{G}^{(c)}$.
They occur as local components in the restricted tensor product decomposition of the automorphic representation $(\pi_\theta, V_\theta)$ of global Kazhdan-Patterson 
covering group $\widetilde{G(\mathbb{A}_k)}$ in \cite[Theorem~II.2.1]{KP84}, motivated by seeking analogues of classical theta functions. 

Since $V_0(\omega^\prime)$ occur as quotients of principal series representations $V(\omega^\prime)$, any $\lambda \in \mathrm{Wh}(V_0(\omega^\prime))$
may be viewed as an element in $\mathrm{Wh}(V(\omega^\prime))$ satisfying a formula, see \cite[Corollary~I.3.4]{KP84} or Theorem~\ref{formula}. 
Moreover, the formula is used in \cite[Theorem~I.3.5]{KP84} to connect the number of free orbits of the Weyl group acting on a certain 
space with $\dim(\mathrm{Wh}(V_0(\omega^\prime)))$. As a corollary,
they obtain the equivalent conditions of $\dim(\mathrm{Wh}(V_0(\omega^\prime)))=1$.

Later, in \cite{GaoFan2017Dtrf}, Gao gives effective lower and upper bounds for $\dim(\mathrm{Wh}(V_0(\omega^\prime)))$ for certain 
Brylinski-Deligne covering group of a split linear algebraic group $\mathbb{G}$ over $F$ using an analogue of \cite[Corollary~I.3.4]{KP84} in the Brylinski-Deligne setting. 
Furthermore, for some covering groups of
certain semisimple simply connected $\mathbb{G}$, Gao gives necessary and sufficient conditions for the exceptional representations
to have a unique Whittaker functional.

\subsection{Main Results}
In this note, we focus on the formula of Whittaker functionals associated with exceptional representations 
for $\widetilde{G}^{(c)}= \widetilde{\mathrm{GL}_r(F)^{(c)}}$.
We provide two derivations of the formula. The first (Section \ref{calculation}) follows 
the strategy of \cite[\S~I.3]{KP84}. We fill in some details and compute the group multiplication
using the block-compatible cocycle of \cite{Banks1999BlockcompatibleMC}. The second (Section \ref{compare}) is obtained by 
specializing the formula given in \cite[Corollary~3.7]{GaoFan2017Dtrf} for Brylinski-Deligne covering groups to the
$\mathrm{GL}_r$ case. We briefly explain the difference between the two methods next.

The notations introduced next will be valid throughout the note. Let $F$ be a non-Archimedean local field containing all $n$-th roots of unity $\mu_n$, and let $G = \mathrm{GL}_r(F)$. 
We focus on the $n$-fold (twisted) Kazhdan-Patterson covering group $\widetilde{G}:=\widetilde{G}^{(c)}$, with the parameter $c\in \mathbb{Z}/ n\mathbb{Z}$ ignored if there is no confusion. 
In the foundational work of Kazhdan and Patterson \cite{KP84}, the group $\widetilde{G}$ was constructed using Matsumoto's method, 
which is rather abstract at least for the author.
In this note, we take a different approach to define the Kazhdan-Patterson covering group.

Because $\widetilde{G}$ is a central extension satisfying the exact sequence
\[
    0 \longrightarrow \mu_n \xrightarrow{i} \widetilde{G} \xrightarrow{p} G \longrightarrow 0
\]
we may alternatively construct it as a group extension via an explicit 2-cocycle $\sigma:=\sigma^{(c)}$. 
For $r=2$, Kubota constructed a concrete 2-cocycle for $\mathrm{SL}_2$ \cite{Kub67} and then for $\mathrm{GL}_2$ \cite{Kub69}. 
Later, Banks, Levy, and Sepanski \cite{Banks1999BlockcompatibleMC} obtained a 2-cocycle for $\widetilde{G}$ associated to general $r$.
Crucially, they proved that this 2-cocycle satisfies a block-compatible formula when restricted to any Levi subgroup. 
We will use this formula to calculate group multiplications. This approach was already used in \cite{BBL03}, but details were omitted.

From a more modern perspective, the Kazhdan-Patterson covering group, along with many other interesting topological central extensions, 
can be studied under the uniform framework of Brylinski-Deligne covering groups \cite{BD01}. Much of the work in \cite{KP84} relevant to 
Whittaker models has been generalized to Brylinski-Deligne covering groups for a connected reductive group $\mathbb{G}$ by McNamara \cite{Mc11}, 
and further for split $\mathbb{G}$ by Gao \cite{GaoFan2017Dtrf}. 
In particular, McNamara \cite{Mc11} proved an analogue of \cite[Lemma~I.3.3]{KP84} for Brylinski-Deligne covering groups under certain conditions.
And later, his proof was refined by Gao \cite{GaoFan2017Dtrf}, showing that those conditions may be dropped in Gao's setup. 
We will introduce the formula in \cite[Corollary~3.7]{GaoFan2017Dtrf}. By specifying some data associated to Brylinski-Deligne covering groups,
we may recover Kazhdan-Patterson covering groups and compare Theorem~\ref{formula} and \cite[Corollary~3.7]{GaoFan2017Dtrf}.
In fact, the two results coincide when the character $\omega^\prime$ is unramified and $\lvert n\rvert=1$.

\subsection{Applications to Co-period Integrals}
The period integral is a basic object in the relative Langlands program \cite{SY}. The Whittaker functional formula has direct applications to the study of period integrals on metaplectic covers. 
Following the work of Ginzburg-Jiang-Rallis \cite{Ginzburg2001NonvanishingOT}, recently, Cai, Fan, and She \cite{cai2025coperiodscentralsymmetriccube} investigated the global co-period integral attached to an automorphic form on $\mathrm{GL}_2$ 
and two exceptional theta series on the cubic Kazhdan-Patterson cover of $\mathrm{GL}_2$. 
In their work, they propose an Ichino-Ikeda type conjecture relating the global co-period to the central critical value of the symmetric cube $L$-function, $L(1/2, \sigma, \mathrm{Sym}^3)$,
and prove it in the case of Eisenstein series.

A critical step in the proof is the evaluation of the local unramified co-period integral. 
By utilizing the Whittaker functional formula established in \cite[Corollary~I.3.4]{KP84}, for each non-Archimedean place $v$,
one can relate the local integral of matrix coefficients in the unramified case with a certain $L$-value ratio \cite[Theorem~2.24]{cai2025coperiodscentralsymmetriccube}, 
\[
    I(f_v \otimes f_{+,v} \otimes f_{-,v}, f_v^\vee \otimes f_{+,v}^\vee \otimes f_{-,v}^\vee) = 
        \frac{L(1/2,\sigma_v,\mathrm{Sym}^3)}{L(1,\sigma_v,\mathrm{ad})}
\]
here $I$ can be decomposed into the product of two local co-period integrals.
This shows that local co-periods are compatible with the Ichino-Ikeda type conjecture.

\subsection{Future Directions}

The recent work \cite{GaoFan2017Dtrf} assumes the inducing character is unramified. It is an interesting problem to 
study the ramified case and see if similar conclusions hold for a connected split reductive group $\mathbb{G}$.

Another direction is to explore the case $\lvert n \rvert \neq 1$. If $\lvert n \rvert = 1$, for certain covers such as saturated covers, 
we expect that $\dim(\mathrm{Wh}(V_0(\omega^\prime)))$ is equal to the number of free orbits of a certain Weyl action. 
Studying this connection in the $\lvert n \rvert \neq 1$ case would be a helpful addition to the current theory.

Finally, for certain covers discussed above, the dimension of $\mathrm{Wh}(V_0(\omega^\prime))$ 
does not depend on the additive character $\psi$. 
However, for general covers, the dimension indeed depends on $\psi$, and this needs a better explanation. 
Understanding how $\psi$ changes this dimension is also a natural problem.
\\

\textbf{Acknowledgements.}
The author would like to express his sincere gratitude to Prof.\ Li Cai and Prof.\ Yangyu Fan for their invaluable guidance, patience, and support throughout the course of this project.
Also, the author is grateful to Prof.\ Fan Gao and Prof.\ Wen-Wei Li for their illuminating comments and advice.

\section{Main Results} \label{setup}
\subsection{Kazhdan-Patterson Covering Group}
Let 
\begin{itemize}
    \item $F$ be a local field containing all $n$-th roots of unity $\mu_n$, $n \geq 1$, embedded in $\mathbb{C}$ via $\varepsilon:\mu_n \rightarrow \mathbb{C}^\times$
    \item $O, \varpi, q$ be the valuation ring, uniformizer, cardinality of residue field of $F$, with absolute value $\lvert \cdot \rvert$ normalized as $\lvert \varpi \rvert = q^{-1}$
    \item $e_0$ be an additive character on $F$ with conductor $O$ and self-dual Haar measure $dx$ on $F$ with respect to $e_0$, $d^\times x = dx / \lvert x \rvert$
    \item $(-,-):F^\times / F^{\times n}  \times F^\times / F^{\times n} \rightarrow \mu_n$ be the $n$-th Hilbert symbol
    \item $c \in \Z / n \Z$
    \item \(d=\frac{n}{\gcd(n,r-1+2rc)}\)
    \item $G = \textnormal{GL}_r(F)$, $Z$ be the center, $N=N_+$ be the upper unipotent subgroup, $T$ be the diagonal torus, $T_n=\mathrm{diag}\{F^{\times n},\cdots,F^{\times n}\}$, $B$ be the upper triangular Borel subgroup
    \item $\alpha$ be a simple root $(i, i+1)$ of $G$, $s$ be the simple reflection associated to $\alpha$
    \item $(a, b)$ be a root of $G$, for any $h=\mathrm{diag}\{h_1, \cdots, h_r\} \in T$, define $h^{(a, b)}= h_a / h_b$
    \item $\mu:T \rightarrow \mathbb{R}^{>0}$ be the character defined as \[\mu(h)=\prod_{\beta \text{ is a positive root}} \lvert h^{\beta} \rvert^{1/2}\]
    \item $W$ be the Weyl group of $G$, consisting of all permutation matrices corresponding to the permutation group of $r$ letters $S_r$
    \item $w_0 \in W$ be the longest element, that is $w_0 = (\delta_{i+j,r+1})_{ij}$, where $\delta$ is the Kronecker symbol
\end{itemize}
Note that when $r=2$, $\mu$ is nothing but the square root of the modulus character of $B$. \\
Let \(\widetilde{G} := \widetilde{G}^{(c)}\) be the central extension
\[
0 \longrightarrow \mu_n \xrightarrow{i} \widetilde{G}^{(c)} \xrightarrow{p} G \longrightarrow 0
\]
equipped with group structure
\[
(g_1, z_1) \cdot (g_2, z_2) = \bigl(g_1 g_2, z_1 z_2 \sigma(g_1, g_2)\bigr), \quad g_1, g_2 \in G,\ z_1, z_2 \in \mu_n;
\]
with \(i(z) := (1, z)\), \(p(g, z) := g\) and \(\mathbf{s}(g) := (g, 1)\). 
Using the terminology of group cohomology, we fix a special 2-cocycle $\sigma:G\times G \rightarrow \mu_n$ lying in $Z^2(G,\mu_n)$, with $\mu_n$ viewed as a trivial $G$-module.
The explicit construction of the cocycle $\sigma$ has been done in \cite{Banks1999BlockcompatibleMC}, we cite a useful formula here.
\begin{theorem} \label{cocycle}
    Let $\beta$ be a composition of positive integers of $r$,
    which means that $\beta = (r_1, \dots, r_k)$ for certain $k$ and positive integers $r_i$ such that 
    $r = r_1 + \dots + r_k$. Let $G_\beta$ be the standard Levi subgroup of $G$ with respect to $\beta$. 
    And thus $G_\beta \simeq \mathrm{GL}_{r_1}(F) \times \dots \times \mathrm{GL}_{r_k}(F)$ with each $\mathrm{GL}_{r_i}(F)$ being regarded as a subgroup of $G$ via the 
    embedding $g_i \mapsto \operatorname{diag}\{I_{r_1}, \dots, I_{r_{i-1}}, g_i, I_{r_{i+1}}, \dots, I_{r_k}\}$.
    Then the 2-cocycle $\sigma$ satisfies the following description.
    \begin{itemize}
        \item \cite[Corollary~8]{Banks1999BlockcompatibleMC} When $r = 1$, we have $\sigma(x, y) = (x, y)^c$ for $x, y \in F^\times$.
    
        \item \cite[Theorem~11]{Banks1999BlockcompatibleMC} The restriction of $\sigma$ to $G_\beta \times G_\beta$ is given by
            \begin{align*}
                &\sigma\bigl(\operatorname{diag}\{x_1, \dots, x_k\}, \operatorname{diag}\{y_1, \dots, y_k\}\bigr)  \\
                =&\prod_{i=1}^k \sigma(x_i, y_i) \prod_{1 \leq i < j \leq k} (\det(x_i), \det(y_j))^{c+1} (\det(x_j), \det(y_i))^c,
            \end{align*}
        where $x_i, y_i \in \mathrm{GL}_{r_i}(F)$
    \end{itemize}
\end{theorem}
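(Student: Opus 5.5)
This statement is cited from Banks--Levy--Sepanski, so the plan is to reconstruct their argument rather than find a new one. Recall that $\sigma$ is built in \cite{Banks1999BlockcompatibleMC} from a Matsumoto-type cocycle $\sigma_{\mathrm{SL}}$ on $\mathrm{SL}_{r+1}(F)$ (or $\mathrm{SL}_r$). It is transported to $\mathrm{GL}_r$ via the embedding $g\mapsto \operatorname{diag}\{g,\det g^{-1}\}$, and then twisted by the coboundary-free factor $(\det g_1,\det g_2)^c$.

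For $r=1$ the claim follows directly. The embedded element is $\operatorname{diag}\{x,x^{-1}\}\in\mathrm{SL}_2$, and Kubota's cocycle on the torus gives $\sigma_{\mathrm{SL}_2}(\operatorname{diag}\{x,x^{-1}\},\operatorname{diag}\{y,y^{-1}\})=(x,y)^{\pm1}$. With the normalization chosen in \cite{Banks1999BlockcompatibleMC}, this combines with the twist $(x,y)^c$. The twisting exponents are arranged precisely so that the result is $(x,y)^c$. I would verify this by a direct Steinberg-symbol computation, $c(x,y)=h(xy)h(x)^{-1}h(y)^{-1}$.

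For general $\beta$ I would follow the two-step structure of \cite{Banks1999BlockcompatibleMC}.

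\textbf{Step 1: the torus.} Restrict to $T$ and use Matsumoto's description: on $T$, the cocycle is the bimultiplicative pairing attached to the Weyl-invariant quadratic form. For the Kazhdan--Patterson cover this gives
\[
\sigma(x,y)=\prod_{i<j}(x_i,y_j)^{c+1}(x_j,y_i)^{c}\prod_i (x_i,y_i)^{c}
\]
for diagonal $x,y$. Grouping the indices by blocks of $\beta$ and using bimultiplicativity of the Hilbert symbol recovers the claimed formula whenever each $x_i,y_i$ is diagonal, since $\det$ is the product of the entries.

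\textbf{Step 2: extension to the Levi.} The core of the argument is to show that
\[
\tau(x,y):=\sigma(x,y)\Big/\Bigl(\prod_i\sigma(x_i,y_i)\prod_{i<j}(\det x_i,\det y_j)^{c+1}(\det x_j,\det y_i)^{c}\Bigr)
\]
is identically $1$ on $G_\beta\times G_\beta$. Both $\sigma$ and the product of the block cocycles are 2-cocycles, and the determinant-pairing factor is bimultiplicative, so $\tau$ is a 2-cocycle. I would then show that $\tau$ is trivial on the torus (by Step 1) and on pairs involving root subgroups $N_{\pm\alpha}$ inside the blocks, where $\sigma$ and the block cocycles vanish and $\det=1$. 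Bruhat decomposition then propagates triviality to all of $G_\beta$.

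The main obstacle is precisely Step 2. A cocycle trivial on generators need not be identically $1$, only cohomologous to $1$. So one must use the specific normalized form of $\sigma$ from \cite{Banks1999BlockcompatibleMC}, namely its explicit formula via Bruhat-cell representatives $\sigma(g_1,g_2)$ in terms of the torus parts and Weyl elements. The block-compatibility has to be checked on each Bruhat cell $B w B$ of the Levi, where $w$ is a product of the block Weyl elements. This is where the commutation of the $w_i$ for different blocks, and the fact that the torus part of $w_i$ contributes only through $\det$, are essential. I would attempt this first for $k=2$ and then conclude by induction on $k$, using associativity of the block decomposition.
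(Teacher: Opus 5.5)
The paper gives no proof of this statement. Both bullets are quoted from \cite{Banks1999BlockcompatibleMC} (Corollary~8 and Theorem~11), so your proposal has to stand on its own as a reconstruction, and its core is missing. The statement concerns one specific cocycle, not a cohomology class, so everything must come out of the explicit BLS section. In particular, the torus formula in your Step~1 is a property of that section, since Matsumoto's theory fixes the torus cocycle only up to coboundary. Once that formula is granted, the $r=1$ bullet is just its case $r=1$. Your separate $r=1$ argument does not close, however. If $\sigma$ were, as you describe, the pullback along $g\mapsto\operatorname{diag}\{g,\det g^{-1}\}$ times $(\det g,\det g')^c$, the $r=1$ value would be $(x,y)^{c\pm1}$. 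For $n>2$ this is not even cohomologous to $(x,y)^c$, because the commutator pairings $(x,y)^{2c\pm2}$ and $(x,y)^{2c}$ differ. An extra determinant correction has to be identified, not assumed.

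The genuine gap is Step~2, which is the entire content of Theorem~11.
\begin{itemize}
\item Your generator argument fails, and two of its claims are false. First, $\sigma$ does not vanish on pairs involving negative root subgroups: already for Kubota's cocycle, $\sigma\bigl(\bigl(\begin{smallmatrix}1&0\\ b&1\end{smallmatrix}\bigr),\operatorname{diag}\{a,a^{-1}\}\bigr)=(a,b)^{\pm1}$. Second, a cocycle trivial on pairs of generators need not even be a coboundary. For example, the section $g^k\mapsto\tilde g^k$ ($0\le k<4$) of $\mathbb{Z}/8\to\mathbb{Z}/4$ gives $\tau(g,g)=1$, yet the extension is non-split.
\item Your fallback, checking each Bruhat cell with BLS's formula, just restates the claim.
\item What you actually need is block-level information on the BLS section. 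Write $\iota_i$ for the $i$-th block embedding and $\sigma_{r_i}$ for the rank-$r_i$ cocycle. You need (a) $\sigma(\iota_ix,\iota_iy)=\sigma_{r_i}(x,y)$, and (b) $\sigma(\iota_1x,\iota_2y)=(\det x,\det y)^{c+1}$, i.e.\ $\mathbf s(\iota_1x)\mathbf s(\iota_2y)=(\det x,\det y)^{c+1}\mathbf s(\iota_1x\,\iota_2y)$. Both are special cases of the theorem, and both must be extracted from BLS's construction; no argument from generators or cohomology can produce them.
\item Granting (a) and (b), the case $k=2$ is formal. The commutator of lifts of $\iota_2x$ and $\iota_1y$ is bimultiplicative. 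It therefore factors through $\det\times\det$, because $[\mathrm{GL}_m(F),\mathrm{GL}_m(F)]=\mathrm{SL}_m(F)$, and Step~1 evaluates it as $(\det x,\det y)^{2c+1}$. Expanding $\mathbf s(x)\mathbf s(y)$ then gives exactly $\prod_i\sigma_{r_i}(x_i,y_i)\,(\det x_1,\det y_2)^{c+1}(\det x_2,\det y_1)^c$. Your induction on $k$ then finishes the proof.
\end{itemize}
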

We shall adopt the convention that $c$ is taken to be even when $n \equiv 2 \pmod{4}$ without loss of generality 
by \cite[p.~41, Remarks]{KP84}. (If $c$ is odd, take $c^\prime = c+ n/2$.) Under this convention $\mathbf{s}$ is a homomorphism on $W$, and thus,
via \(\mathbf{s}\), we may view \(W\) and \(N\) as subgroups of \(\widetilde{G}\). Also, when $\lvert n\rvert=1$, by \cite[p.~43]{KP84}, there exists
a canonical lifting $\kappa^\ast:\mathrm{GL}_r(O) \rightarrow \widetilde{G}^{(c)}$.
\subsection{Representations and Whittaker Models}
We review the basics of exceptional representations and Whittaker models in this subsection. Our main reference is \cite{KP84}.
For any subgroup \(H \subset G\), let \(\widetilde{H} := p^{-1}(H)\). In the following, we assume $|n|=1$. \\
Let 
\begin{itemize}
    \item \(Z(\widetilde{G}):=\widetilde{Z}^{(c)}\) be the center of \(\widetilde{G}\)
    \item $\Gamma_1=O^\times F^{\times n}, \Gamma_2=O^\times F^{\times d}$
    \item $T_*=\left\{
\operatorname{diag}(\delta t_1,\ldots,\delta t_r):
t_i\in\Gamma_1,\ \delta\in\Gamma_2
\right\},$
    \item $N_+^\ast = \mathbf{s}(N_+)$
    \item \(\widetilde{B}_* = \widetilde{T}_* N_+^\ast\)
\end{itemize}
Note that \(\widetilde{T}_* \subset \widetilde{T}\) be a maximal abelian subgroup containing $\widetilde{T}_n \widetilde{Z}^{(c)}$.
For any \textbf{genuine} character \( \omega \) of \( \widetilde{Z}^{(c)} \widetilde{T}_n \), that is, $\omega \circ i = \varepsilon$, take an arbitrary extension  \( \omega^\prime \) to \( \widetilde{T}_* \). 
Also we may extend it to $\widetilde{B}_\ast$ by \[\omega^\prime(hn)=\omega^\prime(h), \quad h \in \widetilde{T}_\ast, n \in N_+^\ast\]
The following definition concerns the convergence of certain integrals.
\begin{definition}
    Let $\omega$ be as above. Write 
    \[
        \lvert \omega(h) \rvert = \prod_{i} \lvert h_i \rvert^{t_i}, \quad h \in \widetilde{T}_n \widetilde{Z}^{(c)}
    \]
    where $p(h)= \mathrm{diag}\{h_1,\cdots,h_r\}$ and $t_1, t_2, \cdots, t_r \in \mathbb{R}$ are uniquely determined by $\omega$. \\
    We say that $\omega$ is \textbf{dominant} if $t_1 > t_2 > \cdots > t_r$.
\end{definition}
Let \( V(\omega^\prime) \) be the usual smooth induction
\[
\left\{ f : \widetilde{G} \to \mathbb{C} \ \text{smooth} \mid f(bg) = \omega^\prime(b)\mu(b)f(g),\ b \in \widetilde{B}_*,\ g \in \widetilde{G} \right\}.
\]
Here \( V(\omega^\prime) \) depends only on \( \omega \) (independent of the extension).
For convenience, set 
\begin{itemize}
    \item $\omega^n(h)=\omega(\mathbf{s}(h^n))$, $h \in T$
    \item $\omega^n_\alpha(x) = \omega^n(\mathrm{diag}\{1,\cdots,1,x,x^{-1},1,\cdots,1\}), x \in F^\times$, where $x,x^{-1}$ lie at the $i$-th, $(i+1)$-th position respectively. This is a character on $F^\times$.
\end{itemize}
\begin{definition}
    We say that $\omega$ is \textbf{exceptional} if $\omega_\alpha^n(x) = \lvert x \rvert$, for any positive simple root $\alpha$ of $G$ and $x \in F^\times$.
\end{definition}

Fix a character \(\omega_0\) of \(\widetilde{Z}^{(c)}\), and form the character \(\omega_0 \times e\) of \(\widetilde{Z}^{(c)}N_+^\ast\) by
\[
(\omega_0 \times e)(zn) = \omega_0(z) e(n) \quad (z \in \widetilde{Z}^{(c)},\ n \in N_+^\ast).
\]
where $e(n)= e_0 (\sum_{1 \leq i \leq r-1} n_{i,i+1}), \forall n \in N_+$.
We shall suppose that \(\omega_0 \circ i = \varepsilon\) on \(\mu_n(F)\).
Then we form the representation \((\rho_e, L_e)\) of \(\widetilde{G}\) by smooth induction from \(\omega_0 \times e\).

\begin{definition}
    We say that a Whittaker model of an admissible $\widetilde{G}$-representation \((\pi, V)\) is an injective $\widetilde{G}$-morphism \((\pi, V) \to (\rho_e, L_e)\).
\end{definition} 
Let us recall how these are in general constructed. 
Let \((\pi', V')\) be the dual representation to \((\pi, V)\), i.e. \(V'\) is the dual space of \(V\), the action of \(v' \in V'\) on \(v \in V\) being denoted by \(\langle v', v \rangle\). Then \(\pi'\) is defined by
\[
\langle \pi'(g) v', v \rangle = \langle v', \pi(g)^{-1} v \rangle.
\]

Now suppose that \(\lambda \in V'\) and satisfies
\[
\pi'(n) \lambda = \overline{e(n)} \lambda, \quad n \in N_+^\ast
\]
and that \(\pi|_{\widetilde{Z}^{(c)}} = \omega_0\). Then if \(v \in V\), \((g \mapsto \langle \lambda, \pi(g) v \rangle) \in L_e\). If \((\pi, V)\) is irreducible and \(\lambda \neq 0\) this is clearly injective. 
Conversely, if \(t: (\pi, V) \to (\rho_e, L_e)\) then \(\lambda(v) = t(v)(I)\) is an element of \(V'\) as above. We denote the vector space of such \(\lambda\) by \(\mathrm{Wh}(V)\).\\
Now we introduce the \textbf{exceptional representations}. \\
Consider the character $\omega^\prime:\widetilde{B}_\ast \rightarrow \mathbb{C}^\times$ with previous assumptions, assuming it is dominant
we form the smooth induction $V(\omega^\prime) = I_{\widetilde{B}_\ast}^{\widetilde{G}}\omega^\prime$.
Let $N_-$ be the lower triangular unipotent subgroup, and put
\[
N_+(w)=N_+\cap wN_-w^{-1}, \qquad N_+^\ast(w)=\mathbf{s}(N_+(w)).
\]
For any $w \in W$ of $G$, it acts on $\omega^\prime(x)$ by $\prescript{w}{}{\omega^\prime}(x) = \omega^\prime(w^{-1}xw)$. We may construct an intertwining operator
\[
    \mathrm{I}_w : V(\omega^\prime) \rightarrow V(\prescript{w}{}{\omega^\prime})
\]
defined as
\[
    (\mathrm{I}_w f)(g) = \int_{N_+^\ast(w)} f(w^{-1} ng)dn, \quad \forall f \in V(\omega^\prime)
\]
and it converges when $\omega^\prime$ is dominant. 
By the regularization process (see \cite[p.~66]{KP84}), we may drop the dominant assumption.
In particular, when we take $w = w_0$ the longest element in $W$ and $\omega^\prime$ is \textbf{exceptional}, there is an exact sequence
    \begin{equation*} \label{exact_sequence}
        0 \longrightarrow \mathrm{Ker}(\mathrm{I}_{w_0}:V(\omega^\prime) \rightarrow V(\prescript{w_0}{}{\omega^\prime})) \longrightarrow V(\omega^\prime) \xrightarrow{\mathrm{I}_{w_0}} V_0(\omega^\prime) \longrightarrow 0
    \end{equation*}
where
\begin{itemize}
    \item $V_0(\omega^\prime)=\mathrm{Im}(\mathrm{I}_{w_0}:V(\omega^\prime) \rightarrow V(\prescript{w_0}{}{\omega^\prime}))$
    \item $\mathrm{Ker}(\mathrm{I}_{w_0}:V(\omega^\prime) \rightarrow V(\prescript{w_0}{}{\omega^\prime}))$ is generated by $\{\mathrm{Im}(\mathrm{I}_{s}:V(\prescript{s}{}{\omega^\prime}) \rightarrow V(\omega^\prime))\}_s$ 
    where $s$ runs through all the simple reflections of $W$
\end{itemize}
We call the representation $V_0(\omega^\prime)$ the \textbf{exceptional representation}.
\begin{definition}
    Suppose the character $\omega^\prime$ defined earlier is \textbf{dominant}, define a linear functional of $V(\omega^\prime)$, say $\lambda_\eta$, as
    \[
        \langle \lambda_\eta , f \rangle = \int_{N_+^\ast} \overline{e(n)} f(\eta w_0^{-1}n) dn
    \]
    where $f \in V(\omega^\prime), \eta \in \widetilde{T}$ and $w_0$ is the longest element in Weyl group $W$. \\
    The condition $\omega^\prime$ is dominant implies that the integral above is absolutely convergent, and easy to verify that
    \[\lambda_{h\eta}= (\omega^\prime \mu)(h)\lambda_\eta, \forall h \in \widetilde{T}_\ast\]
    so $\lambda_\eta \in \mathrm{Wh}(V(\omega^\prime))$.
\end{definition}
This suggests
\begin{proposition} \cite[Lemma~I.3.2]{KP84}
    One has
    \[
        \dim(\mathrm{Wh}(V(\omega^\prime))) = [\widetilde{T} : \widetilde{T}_\ast]
    \]
    and we have an isomorphism
        \begin{align*}
            \mathbb{Z}^r / \Lambda &\simeq \widetilde{T} / \widetilde{T}_\ast \\
            \mathbf{f} = (f_1,f_2,\cdots,f_r) &\mapsto \varpi^{\mathbf{f}} = \mathbf{s}\bigl(\mathrm{diag}\{\varpi^{f_1},\cdots,\varpi^{f_r}\}\bigr)
        \end{align*}
        here \[\Lambda = \left\{ \mathbf{f} \in \mathbb{Z}^r : f_j \equiv (2c + 1) \sum_{k} f_k \pmod{n} \right\}\]
    Moreover, if $\omega$ is dominant, then $\{\lambda_\eta\}_{\eta \in \widetilde{T}_\ast \backslash \widetilde{T}}$ is a basis of $\mathrm{Wh}(V(\omega^\prime))$.
\end{proposition}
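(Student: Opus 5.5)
The plan is to treat the three assertions in order: first the isomorphism $\mathbb{Z}^r/\Lambda \simeq \widetilde{T}/\widetilde{T}_\ast$, then the dimension formula via Bruhat filtration (Bernstein--Zelevinsky geometric lemma), and finally the basis statement via the functionals $\lambda_\eta$.

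\textbf{Step 1: the torus quotient.} Since $\mu_n \subset \widetilde{T}_\ast$, we have $\widetilde{T}/\widetilde{T}_\ast \simeq T/T_\ast$. Because $O^\times \subset \Gamma_1 \subset T_\ast$ in each coordinate, the map $\mathbb{Z}^r \to T/T_\ast$, $\mathbf{f}\mapsto \mathrm{diag}\{\varpi^{f_1},\dots,\varpi^{f_r}\}$, is surjective. Its kernel consists of those $\mathbf{f}$ for which $\varpi^{\mathbf{f}}$ can be written as $\delta\,\mathrm{diag}(t_i)$ with $t_i\in\Gamma_1$ and $\delta\in\Gamma_2$. Writing $v(\delta)=e$ with $d\mid e$ up to units, this means $f_j\equiv e \pmod n$ for all $j$ with $d \mid e$. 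We then check that this set equals $\Lambda$. If $f_j\equiv e$ for all $j$, then $(2c+1)\sum_k f_k \equiv (2c+1)re$, and $(2c+1)re - e = (r-1+2rc)e \equiv 0 \pmod n$ precisely because $d\mid e$ (recall $d = n/\gcd(n,r-1+2rc)$). Conversely, if $f_j \equiv (2c+1)\sum_k f_k =: e$ for all $j$, then summing gives $\sum f \equiv re$, so $e \equiv (2c+1)re$, i.e.\ $(r-1+2rc)e\equiv 0$, hence $d\mid e$ modulo $n$. So the map descends to an isomorphism. A sanity check is to cite or verify that $\widetilde{T}_\ast$ is maximal abelian, via the commutator $[\mathbf{s}(x),\mathbf{s}(y)] = \prod_{i}(x_i,y_i)^{2c+1}\cdot\prod_{i\neq j}(x_i,y_j)^c$ computed from Theorem~\ref{cocycle}, though this is not strictly needed for the index.

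\textbf{Step 2: dimension.} I would filter $V(\omega^\prime)$ by the Bruhat cells $\widetilde{B}_\ast\backslash \widetilde{B} w \widetilde{B}$ and compute the $(N_+^\ast,e)$-twisted Jacquet functor of each subquotient. The subquotient for cell $w$ is a compactly induced representation from $\widetilde{T}N_+$. Its $(N,e)$-coinvariants vanish unless $e$ is trivial on $N_+\cap w^{-1}N_+w$, which forces $w=w_0$ because $e$ is generic. For $w=w_0$ the cell is open and the coinvariants reduce to $\mathrm{ind}_{\widetilde{T}_\ast}^{\widetilde{T}}(\omega^\prime\mu)$ restricted appropriately. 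Its dual is then the space of functions on $\widetilde{T}_\ast\backslash\widetilde{T}$, and this is finite of dimension $[\widetilde T:\widetilde T_\ast]$ since the quotient is finite by Step 1. Exactness of the twisted Jacquet functor gives $\dim \mathrm{Wh} = [\widetilde{T}:\widetilde{T}_\ast]$.

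\textbf{Step 3: basis.} For dominant $\omega$, absolute convergence gives $\lambda_\eta\in\mathrm{Wh}$, and $\lambda_\eta$ depends on $\eta$ only up to the scalar $(\omega^\prime\mu)(h)$ for $h\in\widetilde{T}_\ast$. To prove linear independence, I would test against $f$ supported in the open cell $\widetilde{B}_\ast \eta' w_0^{-1} N_+^\ast K_m$ for a small compact open $K_m$. Then $\lambda_\eta(f)\neq 0$ only if $\eta\in \widetilde{T}_\ast\eta'$, since distinct cosets $\widetilde B_\ast\eta w_0^{-1}N_+$ are disjoint. Combined with the count from Step 2, this yields a basis.

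The main obstacle I expect is Step 2. Two points need care there: identifying the open-cell contribution with functions on $\widetilde{T}_\ast\backslash\widetilde{T}$ in the genuine setting, which requires that $N_+^\ast$ is normalized compatibly by $\widetilde{T}$ and uses the splitting of $N$, and ensuring that the vanishing on the smaller cells is handled rigorously rather than heuristically.
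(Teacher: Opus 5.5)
The paper does not prove this proposition. It is imported from \cite[Lemmas~I.3.1--I.3.2]{KP84}, and the only part of that argument the paper reuses is the test-vector device $f^{(m)}_\eta$ in Section~\ref{calculation}. Your three steps are the standard argument behind that citation, and they are correct in substance, apart from the two slips below.

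\textbf{Step 1.} This is fine as written. The kernel of $\mathbf f\mapsto\varpi^{\mathbf f}$ is $\{\mathbf f:\ f_j\equiv e \pmod n \text{ for all } j,\ d\mid e\}$, and your comparison with $\Lambda$ in both directions is right.

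\textbf{Step 2.} The clean way to treat the smaller cells is to note that
\[
\widetilde B_\ast\backslash\widetilde B_\ast w\widetilde B=\bigsqcup_{\eta\in\widetilde T_\ast\backslash\widetilde T}\widetilde B_\ast\backslash\widetilde B_\ast\eta wN_+^\ast,
\]
with each piece isomorphic to $(N_+\cap w^{-1}N_+w)\backslash N_+$. Hence, as an $N_+^\ast$-module, the $w$-subquotient is $\bigoplus_\eta C_c^\infty\bigl((N_+\cap w^{-1}N_+w)\backslash N_+\bigr)$. Its $(N_+^\ast,e)$-coinvariants vanish for $w\neq w_0$ and are one-dimensional for each $\eta$ when $w=w_0$.

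\textbf{First slip: the commutator.} The formula in your sanity check is wrong. Theorem~\ref{cocycle} gives
\[
[\mathbf s(x),\mathbf s(y)]=\sigma(x,y)\sigma(y,x)^{-1}=\prod_i(x_i,y_i)^{2c}\prod_{i\neq j}(x_i,y_j)^{2c+1}=(\det x,\det y)^{2c+1}\prod_i(x_i,y_i)^{-1}.
\]
Already for $r=1$ it must be $(x,y)^{2c}$. This corrected form is what underlies the congruence $f_j\equiv(2c+1)\sum_kf_k$ defining $\Lambda$. You do not use the formula, so nothing else breaks.

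\textbf{Second slip: the support of the test function in Step 3.} The set $\widetilde B_\ast\eta' w_0^{-1}N_+^\ast K_m^\ast$ is not contained in the cell $\widetilde B_\ast\eta' w_0^{-1}N_+^\ast$. Your disjointness argument therefore does not apply to functions supported on it. For example, take $r=2$, $n=\bigl(\begin{smallmatrix}1&x\\0&1\end{smallmatrix}\bigr)$ and $k=\bigl(\begin{smallmatrix}1&0\\y&1\end{smallmatrix}\bigr)\in K_m$. The torus component of $w_0^{-1}nk$ is $\mathrm{diag}\{(1+xy)^{-1},1+xy\}$. When $\operatorname{ord}(xy)=-1$ its valuation vector is $(1,-1)\notin\Lambda$ for $n\geq 2$, so the point lands in a different cell.

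The fix is to take the support $\widetilde B_\ast\eta' w_0^{-1}K_m^\ast$. By the Iwahori factorization of $K_m$, this equals $\widetilde B_\ast\eta' w_0^{-1}(K_m^+)^\ast$; these are exactly the paper's $f^{(m)}_{\eta'}$. Alternatively, use any smooth function compactly supported modulo $\widetilde B_\ast$ inside the open set $\widetilde B_\ast\eta' w_0^{-1}N_+^\ast$. Then $\langle\lambda_\eta,f^{(m)}_{\eta'}\rangle=0$ for $\eta\notin\widetilde T_\ast\eta'$, and $\langle\lambda_{\eta'},f^{(m)}_{\eta'}\rangle=\mathrm{meas}((K_m^+)^\ast)\neq 0$. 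The dimension count from Step~2 then gives the basis claim.
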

So if $\omega$ is dominant, for any $w \in W$, consider a linear functional on $V(\omega^\prime)$
\[
    f \mapsto \langle \lambda_\eta, \mathrm{I}_w f\rangle
\]
where $\lambda_\eta \in \mathrm{Wh}(V(\prescript{w}{}{\omega^\prime}))$. \\
Such a functional is a Whittaker functional, and thus may be written as a linear combination
\[
    \langle \lambda_\eta, \mathrm{I}_w f\rangle = \sum_{\eta^\prime \in \widetilde{T}_\ast \backslash \widetilde{T}}\tau(w, \omega^\prime, \eta, \eta^\prime) \langle\lambda_{\eta^\prime}, f \rangle
\]
Suppose that the function $\mathbf{c}:\widetilde{T} \rightarrow \mathbb{C}$ satisfies $\mathbf{c}(h\eta) = (\omega^\prime \mu)(h^{-1})\mathbf{c}(\eta)$, for any $h \in \widetilde{T}_\ast, \eta \in \widetilde{T}$, 
and we define that 
\[
    \lambda(\mathbf{c}) = \sum_{\eta \in \widetilde{T}_\ast \backslash \widetilde{T}} \mathbf{c}(\eta) \lambda_\eta
\]
Since $\{\lambda_\eta\}_{\eta\in \widetilde{T}_\ast \backslash \widetilde{T}}$ is a basis, when $\omega$ is exceptional, swapping the summation, we see that 
$\lambda(\mathbf{c})$ is a Whittaker functional of $V_0(\omega^\prime)$ if and only if 
\[
\langle \lambda(\mathbf{c}), \mathrm{I}_s f\rangle=0 \Leftrightarrow \sum_{\eta \in \widetilde{T}_\ast \backslash \widetilde{T}} \mathbf{c}(\eta) \tau(s, \prescript{s}{}{\omega^\prime},\eta,\eta^\prime) = 0 
\]
for any simple reflection $s$, $\eta^\prime \in \widetilde{T}_\ast \backslash \widetilde{T}$ and $f\in V(\prescript{s}{}{\omega^\prime})$.
Our task is to determine $\tau(s, \prescript{s}{}{\omega^\prime},\eta,\eta^\prime)$. \\
Note that if we do not assume $\omega^\prime$ to be dominant, the integral $\mathrm{I}_w$ is understood as its regularization and $\lambda_\eta$
is defined in Bernstein's generic sense (see \cite[p.~75]{KP84}), thus the discussion above and the calculations afterwards still make sense.
\subsection{Statement of the Main Theorems} \label{main_thm}
In this section we keep the notation from the previous section. We give a rigorous statement of the main theorems.
Recall that 
\begin{itemize}
    \item $\mathbf{f}=(f_1,\cdots,f_r)$
    \item $\varpi^{\mathbf{f}}=\mathbf{s}(\mathrm{diag}\{\varpi^{f_1},\cdots,\varpi^{f_r}\})$
    \item canonical lifting $\kappa^\ast:\mathrm{GL}_r(O) \longrightarrow \widetilde{G}^{(c)}$ if $\lvert n \rvert=1$
\end{itemize}
\begin{theorem} \cite[Lemma~I.3.3]{KP84} \label{coefficients}
    Suppose $\lvert n \rvert=1$. Let $\omega$ be a genuine character on $\widetilde{T}_n \widetilde{Z}^{(c)}$ such that $\omega_\alpha^n$ is unramified for every simple root $\alpha=(i \enspace i+1), 1 \leq i \leq r-1$.
    Extend $\omega$ to $\widetilde{T}_\ast$, say $\omega^\prime$, such that it is trivial on $$\kappa^\ast(\mathrm{diag}\{1,\cdots,1,y,y^{-1},1,\cdots,1\}), \quad y \in O^\times$$
    where $y,y^{-1}$ lie at the $i$-th and $(i+1)$-th position respectively, for every $1 \leq i \leq r-1$.\\
    Let $$\theta = \omega^\prime(\kappa^\ast(\mathrm{diag}\{1,\cdots,1,-1,1,\cdots,1\}))$$ with $-1$ lying at the ($i+1$)-th position.\\
    Choose a set of representatives of $\widetilde{T}_\ast \backslash \widetilde{T}$ containing $\eta= \varpi^{\mathbf{f}}$ and let $\eta^\prime$ be one of the chosen representatives.
    Set
    \begin{itemize}
        \item $\tau_1(s,\omega^\prime,\eta,\eta^\prime) = \theta(1-\omega_\alpha^n(\varpi))^{-1}(1-q^{-1})\omega_\alpha^n(\varpi^{- \lfloor \frac{f_i-f_{i+1}}{n} \rfloor})$ only when $\eta\eta^{\prime-1}\in\widetilde{T}_\ast$, zero otherwise.
        \item $\tau_2(s,\omega^\prime,\varpi^{\mathbf f},\varpi^{s[\mathbf f]})=\theta \varepsilon(\delta)^{f_i f_{i+1}} q^{f_{i+1}-f_i-2} g^{(f_i-f_{i+1}+1),\overline{e}_0}$,
        and
        \[
        \tau_2(s,\omega^\prime,\varpi^{\mathbf f},\eta^\prime)=0
        \]
        when $\eta^\prime\notin\widetilde T_\ast\varpi^{s[\mathbf f]}$.
    \end{itemize}
    where
    \begin{itemize}
        \item $s[\mathbf{f}]:= ({f_1}, \cdots, f_{i-1},f_{i+1}-1,f_i+1,f_{i+2},\cdots,{f_r})$
        \item $\delta=\begin{cases}
        1   \quad \qquad ,n \text{ is odd} \\
        (-1)^{\frac{q-1}{n}}, n \text{ is even}
        \end{cases}$
        \item $g^{(I),e_0} = q \int_{O^\times} (\varpi,x)^I e_0(x/ \varpi) d^\times x, \enspace I \in \mathbb{Z}/n \mathbb{Z}$
    
    \end{itemize}
    Then we have \[\tau(s,\omega^\prime,\eta,\eta^\prime)=\tau_1(s,\omega^\prime,\eta,\eta^\prime) + \tau_2(s,\omega^\prime,\eta,\eta^\prime)\]
\end{theorem}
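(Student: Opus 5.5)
The plan is to follow Kazhdan--Patterson's rank-one reduction. Since both sides of the claimed identity are linear in $f$, and the map $f\mapsto \langle\lambda_\eta,\mathrm{I}_s f\rangle$ is a Whittaker functional on $V(\prescript{s}{}{\omega^\prime})$, it suffices to compute $\langle\lambda_\eta,\mathrm{I}_s f\rangle$ for dominant $\omega$ and then extend by analytic continuation in the parameters $t_i$. Bernstein's principle of meromorphic continuation then carries the identity to the exceptional (regularized) case.

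First I factor $w_0^{-1}=s^{\prime}w_0^{\prime -1}$ with $s' = w_0^{-1}sw_0$ a simple reflection, and write $N_+=N_+(s)\cdot N'$. Substituting into the definitions of $\lambda_\eta$ and $\mathrm{I}_s$, I move the part of the unipotent integral outside the $\mathrm{GL}_2$-block (in positions $i,i+1$) past the block. This reduces the computation to a single-variable integral of the form
\[
\int_F\int_F \overline{e_0(x)}\, f\bigl(\eta\, w_\alpha\, x_\alpha(x)\, w_\alpha^{-1} x_\alpha(y)\,g\bigr)\,dx\,dy ,
\]
where $x_\alpha$ is the root subgroup and $w_\alpha$ is the image of $s$. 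To evaluate it I use the $\mathrm{GL}_2$ Bruhat identity $w\,x_\alpha(x)=x_\alpha(-x^{-1})\,\mathrm{diag}(\ast)\,x_{-\alpha}(\ast)$, i.e. $\begin{pmatrix}0&1\\-1&0\end{pmatrix}\begin{pmatrix}1&x\\0&1\end{pmatrix}=\begin{pmatrix}1&-x^{-1}\\0&1\end{pmatrix}\begin{pmatrix}x^{-1}&0\\0&x\end{pmatrix}\begin{pmatrix}1&0\\-x^{-1}&1\end{pmatrix}$ (up to the sign conventions of $w_0$). I compute its lift to $\widetilde G$ using the block-compatible formula of Theorem~\ref{cocycle}: the Levi $G_\beta=\mathrm{GL}_1^{i-1}\times\mathrm{GL}_2\times\mathrm{GL}_1^{r-i-1}$ reduces everything to Kubota's $\mathrm{GL}_2$ cocycle times explicit Hilbert symbols $(\det x_j,\det y_k)^{c+1}$ and $(\det x_k,\det y_j)^{c}$. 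In particular, moving $\eta=\varpi^{\mathbf f}$ past $\mathbf s(\mathrm{diag}(x^{-1},x))$ produces a factor $(\varpi,x)^{f_i-f_{i+1}+\dots}$. Together with the ratio of cocycles coming from $\mathbf s$ of the product, and the value $(-1,\varpi)^{f_if_{i+1}}=\varepsilon(\delta)^{f_if_{i+1}}$, this accounts for the twist by $\delta$ and the exponent $I=f_i-f_{i+1}+1$.

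Next I decompose the $x$-integral into shells $|x|=q^{k}$ and write $x=\varpi^{-k}u$ with $u\in O^\times$. The torus element $\mathrm{diag}(x^{-1},x)$ lies in $\widetilde T_\ast\cdot\varpi^{\mathbf f'}$ for a coset determined by $k \bmod n$ together with the dependence on $u$. After pulling it out with $\omega'\mu$, two kinds of shells remain:

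1. The shells with $k\equiv$ a suitable residue keep the coset $\eta$. Summed, they form a geometric series $\sum_m \omega^n_\alpha(\varpi)^m(1-q^{-1})$, truncated at $m\ge\lfloor (f_i-f_{i+1})/n\rfloor$ by the conductor of $e_0$. This gives $\tau_1$.
2. The single shell where the additive character is non-trivial but not yet averaging to zero moves $\eta$ to $\varpi^{s[\mathbf f]}$. There the $u$-integral is exactly the Gauss sum $g^{(f_i-f_{i+1}+1),\overline e_0}$, with normalization $q^{f_{i+1}-f_i-2}$ from $\mu$ and the Jacobian.

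All other shells vanish, either because the character $u\mapsto(\varpi,u)^I$ is nontrivial on an integral against a trivial additive character, or because the additive integral over a large ball vanishes. Triviality of $\omega'$ on $\kappa^\ast(\mathrm{diag}(y,y^{-1}))$ lets the $O^\times$-parts be absorbed, and $\theta$ arises from the sign $-1$ in $w_\alpha$ via $\kappa^\ast$.

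I expect the main obstacle to be the bookkeeping of the cocycle: the precise root-of-unity factors produced when commuting $\varpi^{\mathbf f}$, $\mathbf s(w)$ and $\mathbf s(\mathrm{diag}(x^{-1},x))$, and their compatibility with the canonical lifting $\kappa^\ast$ on $\mathrm{GL}_r(O)$. That compatibility is what is needed to identify $\theta$ and $\varepsilon(\delta)^{f_if_{i+1}}$. I would attack it first in $r=2$ with Kubota's explicit cocycle, and then lift to general $r$ through Theorem~\ref{cocycle}, checking that the extra $\mathrm{GL}_1$-blocks only contribute $(\varpi^{f_j},x)$-type symbols that cancel between $\eta$ and $s[\mathbf f]$.
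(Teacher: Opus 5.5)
Your route is workable, but it is genuinely different from the paper's. You apply the $\mathrm{GL}_2$ Bruhat decomposition to an arbitrary $f$. After shifting the Whittaker variable, $\langle\lambda_\eta,\mathrm{I}_sf\rangle$ becomes an integral over the intertwining variable of an additive character times $\langle\lambda_t,f\rangle$ for explicit $t\in\widetilde T$. You then read off all coefficients at once by sorting shells according to the class of $t$ in $\widetilde T_\ast\backslash\widetilde T$.

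The paper instead isolates one coefficient at a time with test vectors $f^{(m)}_{\eta'}$ supported on $\widetilde B_\ast\eta'w_0K_m^\ast$. The support condition is solved by the block computation, which is where the Bruhat cell enters (as $b_2$ and $\bar n_2$). The resulting $z$-sum splits into $A(m)$, the $|z|=q$ Gauss-sum part, and $B(m)$. The only divergent piece is evaluated through the recursions $A(m+n)=\omega_\alpha^n(\varpi)A(m)$ and $B(m+n)=B(m)+\omega_\alpha^n(\varpi^{m/n})B(n)$.

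Your version is more transparent and is essentially McNamara's and Gao's. The paper's never evaluates a Jacquet integral on a general $f$, nor interchanges one with $\mathrm{I}_s$. That interchange is a real point for you. When $\omega$ is dominant, $\lambda_\eta$ on $V(\prescript{s}{}{\omega^\prime})$ is not an absolutely convergent integral, and no parameter region makes it and $\mathrm{I}_s$ both converge absolutely. So ``compute for dominant $\omega$'' must be done with truncated (stable) rank-one integrals or Kazhdan--Patterson's regularization, not Fubini.

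As written, however, your sketch does not yet produce the constants, and these are the real content of the statement.

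\emph{The Bruhat identity.} Your displayed identity is false: its right-hand side is $\bigl(\begin{smallmatrix}2x^{-1}&-1\\-1&x\end{smallmatrix}\bigr)$. In the statement's normalization, where $s$ is the permutation matrix lifted by $\mathbf s$, the correct identity is $s\,x_\alpha(\xi)=x_\alpha(\xi^{-1})\,\mathrm{diag}(-\xi^{-1},\xi)\,x_{-\alpha}(\xi^{-1})$.

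\emph{The source of $\theta$.} The factor $\theta$ comes from the $-1$ in this torus part, which lands in position $i+1$ in both contributions: $\widetilde h_\alpha(-\nu\varpi)$ for the Gauss sum, and $\widetilde h_\alpha(-\varpi^{n\lfloor (f_i-f_{i+1})/n\rfloor})$ for the geometric series. It does not come from a sign in the Weyl representative. If you use $\bigl(\begin{smallmatrix}0&1\\-1&0\end{smallmatrix}\bigr)$, you have changed the normalization of $\mathrm{I}_s$ and $\lambda_\eta$ and must convert back.

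\emph{The Gauss sum.} The sign of the unipotent shift decides between $g^{(I),\overline e_0}$ and $g^{(I),e_0}$, which differ by $\varepsilon(\delta)^{I}$.

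\emph{The factor $\varepsilon(\delta)^{f_if_{i+1}}$.} You assert it rather than derive it. The paper gets it from Theorem~\ref{cocycle}. With $J=f_{i+1}-f_i-1$, the element $\eta\eta'^{-1}$ carries $(\varpi^J,\varpi^{f_i+1})$, and multiplying by $\widetilde h_\alpha(-\nu\varpi)$ adds $(\varpi,\varpi)^{J^2}$. Since $(\varpi,\varpi)=(-1,\varpi)$ has order dividing $2$, the total reduces to $\varepsilon(\delta)^{f_if_{i+1}}$.

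\emph{Minor slips.} Your geometric series should start at exponent $-\lfloor (f_i-f_{i+1})/n\rfloor$, not $+\lfloor (f_i-f_{i+1})/n\rfloor$. The functional $f\mapsto\langle\lambda_\eta,\mathrm{I}_sf\rangle$ lives on $V(\omega^\prime)$, not $V(\prescript{s}{}{\omega^\prime})$. In your double integral the additive character belongs on the Whittaker variable, i.e.\ the $(r-i,r-i+1)$ entry in the root group of $w_0^{-1}sw_0$, not on the intertwining variable.
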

We postpone the lengthy proof to the next section. Now we can give a criterion of Whittaker functional. 
\begin{theorem} \cite[Corollary~I.3.4]{KP84} \label{formula}
    With the assumptions above, and we further suppose $\omega$ is exceptional.
    Then the functional $\lambda(\mathbf{c})$ defined above is a Whittaker functional of $V_0(\omega^\prime)$ if and only if, for any simple root $\alpha=(i,i+1)$, its associated simple reflection $s$, and every $\mathbf f=(f_1,\ldots,f_r)\in\mathbb Z^r$, one has
    \[
    \mathbf{c}(\varpi^{s[\mathbf{f}]})= 
        \varepsilon(\delta)^{(f_{i+1}-1)(f_i+1)} q^{f_{i+1}-f_i-\lfloor\frac{f_i-f_{i+1}}{n}\rfloor -2} g^{(f_i-f_{i+1}+1), e_0}\mathbf{c}(\varpi^{\mathbf{f}})
    \]
\end{theorem}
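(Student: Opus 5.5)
Since $\omega$ is exceptional, the discussion preceding Theorem~\ref{coefficients} reduces the claim to a linear system. Namely, $\lambda(\mathbf{c})$ lies in $\mathrm{Wh}(V_0(\omega^\prime))$ if and only if, for every simple reflection $s$ and every representative $\eta^\prime$,
\[
\sum_{\eta} \mathbf{c}(\eta)\,\tau(s,\prescript{s}{}{\omega^\prime},\eta,\eta^\prime)=0 .
\]
So the plan is to substitute the explicit coefficients of Theorem~\ref{coefficients}, applied to the character $\prescript{s}{}{\omega^\prime}$ in place of $\omega^\prime$, and read off a two-term relation.

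First I will check that $\prescript{s}{}{\omega^\prime}$ satisfies the hypotheses of Theorem~\ref{coefficients}. Conjugation by $s$ sends $\mathrm{diag}\{\dots,x,x^{-1},\dots\}$ to its inverse, so $(\prescript{s}{}{\omega})^n_\alpha=(\omega^n_\alpha)^{-1}$. This equals $\lvert\cdot\rvert^{-1}$, which is unramified. The triviality on $\kappa^\ast$ of the elements $\mathrm{diag}\{\dots,y,y^{-1},\dots\}$ is preserved, since $\kappa^\ast$ is a homomorphism and $s\in\kappa^\ast(\mathrm{GL}_r(O))$. The constant $\theta$ may change, but it appears in both $\tau_1$ and $\tau_2$ and will cancel. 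With $\omega^n_\alpha(\varpi)$ replaced by $q$, one gets $(1-q)^{-1}(1-q^{-1})=-q^{-1}$. Hence
\[
\tau_1(s,\prescript{s}{}{\omega^\prime},\eta,\eta)=-\theta\,q^{-1}\,q^{\lfloor\frac{f_i-f_{i+1}}{n}\rfloor},\qquad \eta=\varpi^{\mathbf f}.
\]

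Next, fix $\eta^\prime=\varpi^{s[\mathbf f]}$ and collect the nonzero terms in the sum. The term $\tau_1$ contributes only when $\eta\in\widetilde T_\ast\eta^\prime$. The term $\tau_2$ contributes only when $\eta^\prime\in\widetilde T_\ast\varpi^{s[\mathbf g]}$ for $\eta=\varpi^{\mathbf g}$. I need $\mathbf g\mapsto s[\mathbf g]$ to be an involution, and indeed $s[s[\mathbf f]]=\mathbf f$. Modulo $\Lambda$ this is well defined, because the map preserves $\sum f_k$ and the congruence conditions. So $\varpi^{\mathbf f}$ is the unique class hitting $\varpi^{s[\mathbf f]}$.

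It then suffices to apply the equation with $\eta^\prime$ running over $\varpi^{\mathbf f}$ instead. The two surviving terms give
\[
\mathbf c(\varpi^{\mathbf f})\,\tau_1(\varpi^{\mathbf f},\varpi^{\mathbf f})+\mathbf c(\varpi^{s[\mathbf f]})\,\tau_2(\varpi^{s[\mathbf f]},\varpi^{\mathbf f})=0 .
\]
For the $\tau_2$ term I apply Theorem~\ref{coefficients} with $\mathbf g=s[\mathbf f]$, so $g_i=f_{i+1}-1$ and $g_{i+1}=f_i+1$. This gives the factor $\varepsilon(\delta)^{(f_{i+1}-1)(f_i+1)}$ and the power $q^{f_i-f_{i+1}}$. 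The Gauss sum appears as $g^{(f_{i+1}-f_i-1),\overline e_0}$.

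The main obstacle is the bookkeeping needed to invert this Gauss sum into the form stated. I will use $g^{(-I),\overline{e}_0}\,g^{(I),e_0}$, which equals $q$ times $(\varpi,-1)^I$ up to the standard normalization. Since $(\varpi,-1)=\varepsilon(\delta)$ under the stated $\delta$, this sign combines with the $\varepsilon(\delta)$ factors. Solving then yields $\mathbf c(\varpi^{s[\mathbf f]})$ as a multiple of $\mathbf c(\varpi^{\mathbf f})$ with exponent $q^{f_{i+1}-f_i-\lfloor (f_i-f_{i+1})/n\rfloor-2}$. I expect the trickiest checks to be the exact power of $q$ in $\lvert g\rvert^2$, the powers of $\varepsilon(\delta)$ cancelling to the stated exponent, and the choice of representatives being harmless, which follows from the transformation law of $\mathbf c$ together with the matching law of $\tau$.

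Finally, the converse direction is immediate. Each equation in the system involves exactly these two terms, so the relations for all $\mathbf f$ and all $s$ are equivalent to the vanishing condition.
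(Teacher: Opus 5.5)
Your strategy is the paper's: reduce to the linear system, apply Theorem~\ref{coefficients} to $\prescript{s}{}{\omega^\prime}$, take $\eta^\prime=\varpi^{\mathbf f}$, and invert the Gauss sum. It handles the case $n\nmid f_i-f_{i+1}+1$. The gap is the case $n\mid f_i-f_{i+1}+1$, which you never separate out and where your argument breaks.

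In that case $s[\mathbf f]-\mathbf f=(f_{i+1}-f_i-1)\alpha_i^\vee\in\Lambda$, so $\varpi^{\mathbf f}$ and $\varpi^{s[\mathbf f]}$ are the \emph{same} class. The equation for $\eta^\prime=\varpi^{\mathbf f}$ then has a single term with coefficient $\tau_1+\tau_2$, contrary to your closing claim that each equation involves two terms. More seriously, your inversion uses $g^{(I),e_0}g^{(-I),\overline{e}_0}=q$, which fails for $n\mid I$: there $g^{(0),e_0}=g^{(0),\overline{e}_0}=-1$ and the product is $1$.

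Put $m=(f_i-f_{i+1}+1)/n$. Your two-term equation, correctly evaluated, gives $\mathbf c(\varpi^{s[\mathbf f]})=-q^{1-(n+1)m}\mathbf c(\varpi^{\mathbf f})$. The statement instead reads $\mathbf c(\varpi^{s[\mathbf f]})=-q^{-(n+1)m}\mathbf c(\varpi^{\mathbf f})$ in this case.

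The missing idea is the $\widetilde T_\ast$-equivariance of $\mathbf c$. Here $\varpi^{\mathbf f}(\varpi^{s[\mathbf f]})^{-1}=\mathbf s(\mathrm{diag}\{\mathbf I,\varpi^{nm},\varpi^{-nm},\mathbf I\})\in\widetilde T_\ast$, so exceptionality forces $\mathbf c(\varpi^{s[\mathbf f]})=q^{-(n+1)m}\mathbf c(\varpi^{\mathbf f})$. Hence the Whittaker condition, $-\theta q^{-m}(1+q^{-1})\mathbf c(\varpi^{\mathbf f})=0$, and the stated relation are each equivalent to $\mathbf c(\varpi^{\mathbf f})=0$. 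This is exactly how the paper closes this case.

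Two slips in the generic case would also derail your bookkeeping.

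First, since $(\prescript{s}{}{\omega})^n_\alpha(\varpi)=q$, one has $\tau_1(s,\prescript{s}{}{\omega^\prime},\varpi^{\mathbf f},\varpi^{\mathbf f})=-\theta q^{-1-\lfloor\frac{f_i-f_{i+1}}{n}\rfloor}$. It is not $-\theta q^{-1+\lfloor\frac{f_i-f_{i+1}}{n}\rfloor}$. Your version would produce the power $q^{f_{i+1}-f_i+\lfloor\frac{f_i-f_{i+1}}{n}\rfloor-2}$, contradicting the exponent you then assert.

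Second, $g^{(-I),\overline{e}_0}=\overline{g^{(I),e_0}}$, so $g^{(I),e_0}g^{(-I),\overline{e}_0}=q$ exactly for $n\nmid I$, with no extra sign. The sign $(\varpi,-1)^I$ belongs to $g^{(I),e_0}g^{(-I),e_0}$ instead. Inserting it would turn $\varepsilon(\delta)^{(f_{i+1}-1)(f_i+1)}$ into $\varepsilon(\delta)^{f_if_{i+1}}$, which is wrong. Without it, the $\pm1$ factor coming from $\tau_2$ passes through unchanged and is already the stated one.
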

\begin{proof}
    Suppose $n\nmid f_i-f_{i+1}+1$ first.
    We have seen that being a Whittaker functional of $V_0(\omega^\prime)$ is equivalent to 
    \[
        \sum_{\eta \in \widetilde{T}_\ast \backslash \widetilde{T}} \mathbf{c}(\eta) \tau(s, \prescript{s}{}{\omega^\prime},\eta,\eta^\prime) = 0 
    \]
    Applying the previous theorem and letting $\eta^\prime = \varpi^{\mathbf{f}}$, this is equivalent to
    \[
        \tau(s, \prescript{s}{}{\omega^\prime}, \varpi^{s[\mathbf{f}]},\varpi^{\mathbf{f}})\mathbf{c}(\varpi^{s[\mathbf{f}]}) + \tau(s, \prescript{s}{}{\omega^\prime}, \varpi^{\mathbf{f}},\varpi^{\mathbf{f}})\mathbf{c}(\varpi^{\mathbf{f}})=0
    \]
    More explicitly,
    \[
        \theta \cdot \varepsilon(\delta)^{(f_{i+1}-1)(f_{i}+1)} q^{f_i+1-(f_{i+1}-1)-2} g^{(f_{i+1}-f_i-1),\overline{e}_0}\mathbf{c}(\varpi^{s[\mathbf{f}]})+ \theta (1-q)^{-1}(1-q^{-1})q^{-\lfloor\frac{f_i-f_{i+1}}{n}\rfloor}\mathbf{c}(\varpi^{\mathbf{f}})=0
    \]
    here we note that $\prescript{s}{}{\omega_\alpha^n}(x) = \omega_\alpha^n(x^{-1})$. 
    Combining $g^{(i),e_0} g^{(-i), \overline{e}_0} = q$ if $i \neq 0$, we deduce the equation desired. \\
    It remains to consider the case $n\mid f_i-f_{i+1}+1$. Put
    \[
    m=\frac{f_i-f_{i+1}+1}{n}.
    \]
    Moreover,
    \[
    g^{(f_i-f_{i+1}+1),\overline e_0}=-1,\qquad
    \varepsilon(\delta)^{f_if_{i+1}}=1,\qquad
    \left\lfloor\frac{f_i-f_{i+1}}{n}\right\rfloor=m-1.
    \]
    Applying Theorems~\ref{cocycle} and~\ref{coefficients} to $\prescript{s}{}{\omega^\prime}$ and using equivariance gives
    \[
    \begin{aligned}
    \tau_1(s,\prescript{s}{}{\omega^\prime},\varpi^{\mathbf f},\varpi^{\mathbf f})
    &=-\theta q^{-m},\\
    \tau_2(s,\prescript{s}{}{\omega^\prime},\varpi^{\mathbf f},\varpi^{\mathbf f})
    &=\theta\varepsilon(\delta)^{f_if_{i+1}}q^{(n-1)m+f_{i+1}-f_i-2}g^{(f_i-f_{i+1}+1),\overline e_0}\\
    &=-\theta q^{-m-1}.
    \end{aligned}
    \]
    Hence the Whittaker condition is
    \[
    -\theta q^{-m}(1+q^{-1})\mathbf c(\varpi^{\mathbf f})=0,
    \]
    which is equivalent to $\mathbf c(\varpi^{\mathbf f})=0$. Conversely, this vanishing makes the fixed-class equation hold. This completes the proof.
\end{proof}

\begin{remark}
    Actually our proof gives $\lambda(\mathbf{c}) \in \mathrm{Wh}(V_0(\omega^\prime))$ if and only if
    \[
        \mathbf{c}(\varpi^{s[\mathbf{f}]})= 
    \begin{cases}
        \varepsilon(\delta)^{(f_{i+1}-1)(f_i+1)} q^{f_{i+1}-f_i-\lfloor\frac{f_i-f_{i+1}}{n}\rfloor -2} g^{(f_i-f_{i+1}+1), e_0}\mathbf{c}(\varpi^{\mathbf{f}}), \quad n\nmid f_i-f_{i+1}+1\\
        0, \quad n\mid f_i-f_{i+1}+1
    \end{cases}
    \]
\end{remark}

\section{Calculation of the Coefficients} \label{calculation}
This section mainly follows \cite[Lemma~I.3.3]{KP84}, with some additional details. We use $\mathbf{I}$ for the identity matrix.
Recall that $\alpha = (i,i+1)$ with associated simple reflection $s$.\\
\subsection{Reduction to An Integral}
We first show that $\tau(s, \omega^\prime,\eta,\eta^\prime)$ can be expressed as an integral
by appropriately constructed test vectors.
By \cite[Proposition~0.1.2]{KP84}, let
\[
K_m = \{ k \in \mathrm{GL}_r(O) : k \equiv \mathbf{I} \pmod{\mathfrak{p}^m} \},
\]
and suppose that \(m \geq 1\) is large enough so that the covering \(p\) splits over \(K_m\). 
Let \(k \mapsto k^*\) be such a splitting, and let \(K_m^*\) be the image of \(K_m\) in \(\widetilde{G}\). 
Let \(K_m^+ = N_+ \cap K_m\). We can choose suitable extension $\omega^\prime$ of $\omega$ s.t. $\omega^\prime$ is trivial on $\kappa^\ast(\mathrm{diag}\{\mathbf{I},y,y^{-1},\mathbf{I}\}), y\in O^{\times}$, where $y,y^{-1}$ lie at $i$-th and $(i+1)$-th positions respectively.
Then we define \(f_\eta^{(m)}\) to be 
\begin{align*}
    f_\eta^{(m)}(b \eta w_0 k) &= (\omega^\prime \mu)(b), \quad b \in \widetilde{B}_\ast,\ k \in K_m^\ast, \\
    f_\eta^{(m)}(g) &= 0, \quad g \notin \widetilde{B}_\ast \eta w_0 K_m^\ast.
\end{align*}
Thus an element of \(V(\omega^\prime)\) satisfying
\begin{enumerate}
\item[a)] \(\mathrm{Supp}(f_\eta^{(m)}) = \widetilde{B}_* \eta w_0 K_m^* = \widetilde{B}_* \eta w_0 (K_m^+)^*\),
\item[b)] \(f_\eta^{(m)}\) is right-\(K_m^*\)-invariant
\item[c)] \(f_\eta^{(m)}(\eta w_0) = 1\).
\end{enumerate}

If \(m\) is sufficiently large, such an \(f_\eta^{(m)}\) exists. Applying the same trick as in \cite[Lemma~I.3.1]{KP84}, we have:
\[
\begin{aligned}
\langle \lambda_\eta, f_{\eta^\prime}^{(m)} \rangle &\neq 0 \quad (\eta = \eta^\prime) \\
&= 0 \quad (\eta \neq \eta^\prime).
\end{aligned}
\]
Then one has
\[
\langle \lambda_\eta, \mathrm{I}_s f_{\eta^\prime}^{(m)} \rangle = \tau(s, \omega^\prime, \eta, \eta^\prime) \langle \lambda_{\eta^\prime}, f_{\eta^\prime}^{(m)} \rangle,
\]
so that
\[
\tau(s, \omega^\prime,\eta, \eta^\prime) = \langle \lambda_\eta, \mathrm{I}_s f_{\eta^\prime}^{(m)} \rangle / \langle \lambda_{\eta^\prime}, f_{\eta^\prime}^{(m)} \rangle.
\]
Moreover in the proof of \cite[Lemma~I.3.1]{KP84} we saw that
\[
\langle \lambda_{\eta^\prime}, f_{\eta^\prime}^{(m)} \rangle = \mathrm{meas}\big((K_m^+)^*\big)
\]
if \(m\) is large enough. Consequently one has
\[
\tau(s, \omega^\prime, \eta, \eta^\prime) = \lim_{m \to \infty} \langle \lambda_\eta, \mathrm{I}_s f_{\eta^\prime}^{(m)} \rangle / \mathrm{meas}\big((K_m^+)^*\big).
\]
for sufficiently large \(m\). This then takes the form
\[
    \frac{1}{\mathrm{meas}((K_m^+)^*)} \int_{N_+^\ast} \left\{ \int_{N_+^\ast(s)} f_{\eta^\prime}^{(m)}(s n_1 \eta w_0^{-1} n_2) \, dn_1 \right\} \overline{e}(n_2) \, dn_2
\]
where the integrals are to be understood as their regularized values (see \cite[p.~66]{KP84}) since we do not assume that $\omega$ is dominant, and the integrals may not be convergent. 
\subsection{Block Matrix Decomposition}
Now we are faced with an integral on $r \times r$ matrices. We will simplify it to an integral on $F$ which is easier for computation by 
analyzing the integration area and block matrix decomposition.
In order to evaluate the inner integral we first determine when 
\(s n_1 \eta w_0^{-1} n_2\) can lie in \(\widetilde{B}_* \eta^\prime w_0 K_m^* = \widetilde{B}_* \eta^\prime w_0^{-1}(K_m^+)^*\). Suppose that
\[
s n_1 \eta w_0^{-1} n_2 = b \eta^\prime w_0^{-1} k \quad (b \in \widetilde{B}_*, k \in (K_m^+)^*),
\]
or
\[
p(s) p(n_1) = p(b) p(\eta^\prime) p(w_0^{-1} k n_2^{-1} w_0) p(\eta)^{-1}.
\]

We shall express matrices in $G$ in \((r_1 + 2 + r_2) \times (r_1 + 2 + r_2)\) block form so that \(p(s)\) is
\[
\begin{bmatrix}
\mathbf{I} & 0 & 0 \\
0 & \begin{pmatrix} 0 & 1 \\ 1 & 0 \end{pmatrix} & 0 \\
0 & 0 & \mathbf{I}
\end{bmatrix}.
\]

We can then write \(p(n_1)\) as
\[
\begin{bmatrix}
\mathbf{I} & 0 & 0 \\
0 & \begin{pmatrix} 1 & \xi \\ 0 & 1 \end{pmatrix} & 0 \\
0 & 0 & \mathbf{I}
\end{bmatrix}
\]
with \(\xi \in F\), \(p(b)\) as
\[
\begin{bmatrix}
b_1 & x_1 & x_2 \\
0 & b_2 & x_3 \\
0 & 0 & b_3
\end{bmatrix}
\]
with \(b_1, b_2, b_3\) upper-triangular, \(p(\eta)\) (resp. \(p(\eta^\prime)\)) as
\[
\begin{bmatrix}
\eta_1 & 0 & 0 \\
0 & \eta_2 & 0 \\
0 & 0 & \eta_3
\end{bmatrix}
\quad \left( \text{resp. }
\begin{bmatrix}
\eta_1' & 0 & 0 \\
0 & \eta_2' & 0 \\
0 & 0 & \eta_3'
\end{bmatrix}
\right)
\]
with \(\eta_j\) (resp. \(\eta_j'\)) diagonal, and \(p(w_0^{-1} k n_2^{-1} w_0)\) as
\[
\begin{bmatrix}
\bar{n}_1 & 0 & 0 \\
u_1 & \bar{n}_2 & 0 \\
u_2 & u_3 & \bar{n}_3
\end{bmatrix}.
\]
where \(\bar{n}_1, \bar{n}_2, \bar{n}_3\) are lower-triangular matrices. From
\[
p(s) p(n_1) = p(b) p(\eta^\prime) p(w_0^{-1} k n_2^{-1} w_0) p(\eta)^{-1}
\]
one obtains after a short computation that
\begin{align*}
u_1 &= 0, & u_2 &= 0, & u_3 &= 0, \\
x_1 &= 0, & x_2 &= 0, & x_3 &= 0, \\
\bar{n}_1 &= \mathbf{I}, & \bar{n}_3 &= \mathbf{I}, \\
b_1 &= \eta_1 \eta_1'^{-1}, & b_3 &= \eta_3 \eta_3'^{-1}
\end{align*}
and
\[
\begin{pmatrix} 0 & 1 \\ 1 & \xi \end{pmatrix} = b_2 \eta_2' \bar{n}_2 \eta_2^{-1}.
\]

One also demands that
\[
\begin{bmatrix}
\eta_1 \eta_1'^{-1} & 0 & 0 \\
0 & b_2 & 0 \\
0 & 0 & \eta_3 \eta_3'^{-1}
\end{bmatrix}
\]
should belong to \(\widetilde{B}_*\). From the last of the equations above
\[
b_2 = \begin{pmatrix} -\xi^{-1} & 1 \\ 0 & \xi \end{pmatrix} \eta_2 \eta_2^{\prime-1}
\]
and
\[
\bar{n}_2 = \begin{pmatrix} 1 & 0 \\ \nu \xi^{-1} & 1 \end{pmatrix}
\]
where
\[
\nu = p(\eta)^\alpha:= p(\eta)_{ii} / p(\eta)_{i+1,i+1}
\]
Let $z$ be the $(r-i,r-i+1)$-entry of $p(n_2)$. This forces the $2\times2$ block of $p(k)$ in rows and columns $r-i,r-i+1$ to be $\Bigl(\begin{smallmatrix}
    1 & \nu \xi^{-1}+z \\
    0 & 1
\end{smallmatrix}\Bigr)$.
Let now
\[
b(\xi) = \begin{bmatrix}
\mathbf{I} & 0 & 0 \\
0 & \begin{pmatrix} -\xi^{-1} & 1 \\ 0 & \xi \end{pmatrix} & 0 \\
0 & 0 & \mathbf{I}
\end{bmatrix} p(\eta) p(\eta^\prime)^{-1}
\]
and
\[
\widetilde{h}_\alpha(\xi) = \mathbf{s}\left(
\begin{bmatrix}
\mathbf{I} & 0 & 0 \\
0 & \begin{pmatrix} -\xi^{-1} & 0 \\ 0 & \xi \end{pmatrix} & 0 \\
0 & 0 & \mathbf{I}
\end{bmatrix}
\right).
\]

By our assumptions on $\omega^\prime$ and $\mu$, noting that $p(w_0^{-1}kn_2^{-1}w_0) = \mathrm{diag}\{\mathbf{I}, \bar{n}_2,\mathbf{I}\}$, the integral is then

\begin{align*}
&\frac{1}{\mathrm{meas}((K_m^+)^\ast)} \int_{N_+^\ast} \left\{ \int_{N_+^\ast(s)} f_{\eta^\prime}^{(m)}(s n_1 \eta w_0^{-1} n_2) \, dn_1 \right\} \overline{e}(n_2) \, dn_2 \\
&= \frac{1}{\mathrm{meas}((K_m^+)^\ast)} \int_{N_+^\ast / (K_m^+)^\ast} \int_{(K_m^+)^\ast} \int_{N_+^\ast(s)} f_{\eta^\prime}^{(m)}(s n_1 \eta w_0^{-1} u k) \overline{e}(u k) \, dn_1 \, dk \, du \\
&= \sum_{z \in F/\mathfrak{p}^m} \left\{ \int_{\Xi'(z, \eta, \eta^\prime)} (\omega^\prime \mu)(\widetilde{h}_\alpha(\xi) \eta {\eta^\prime}^{-1}) \, d\xi \right\} \overline{e}_0(z)
\end{align*}

where
\[
\Xi'(z, \eta, \eta^\prime) = \left\{ \xi \in F : \nu \xi^{-1} + z \in \mathfrak{p}^m,\ \widetilde{h}_\alpha(\xi) \eta \eta^{\prime -1} \in \widetilde{T}_* \right\}.
\]

Still, it should be understood as a regularized value, first of the integral inside the braces and then of the sum.
If \(z\) does not represent the zero class of \(F/\mathfrak{p}^m\) then apply change of variable $\nu \xi^{-1}+z = u \in \mathfrak{p}^m$ 
to the inner integral 
\[
\int_{\mathfrak{p}^m} (\omega^\prime \mu) (\widetilde{h}_\alpha(\frac{\nu}{u-z}) \eta \eta^{\prime -1}) |\nu| |z|^{-2} du
=(\omega^\prime \mu)\big(\widetilde{h}_\alpha(-\nu z^{-1}) \eta \eta^{\prime -1}\big) q^{-m} |z|^{-2} |\nu|,
\]
if \(\widetilde{h}_\alpha(-\nu z^{-1}) \eta \eta^{\prime -1} \in \widetilde{T}_*\) and is zero otherwise. Here we note that
$\widetilde{h}_\alpha(\frac{\nu}{u-z})$ and $\widetilde{h}_\alpha(-\nu z^{-1})$ differ by 
$\mathbf{s} (\mathrm{diag}\{\mathbf{I},\bigl(\begin{smallmatrix}
    y & 0 \\ 0 & y^{-1}
\end{smallmatrix}\bigr), \mathbf{I}\}) , \enspace y \in O^\times $, where $\omega^\prime \mu$ is trivial.

\subsection{Breaking Up the Integral}
We will show that the whole integral consists of two parts, which we will compute respectively. \\
The integral over \(\Xi'(0, \eta, \eta^\prime)\) is
\[
A(m) = \int \omega^\prime \mu\big(\widetilde{h}_\alpha(\xi) \eta \eta^{\prime -1}\big) d\xi
\]
integrated over those \(\xi \in F^\times\) such that \(\operatorname{ord}(\xi) \leq \operatorname{ord}(\nu)-m\) and \(\widetilde{h}_\alpha(\xi) \eta \eta^{\prime -1} \in \widetilde{T}_*\). 

Consider now the sum over \(z\). This sum we also arrange as the sum of the integral over 
\(\Xi'(0, \eta, \eta^\prime)\) and the sum of the subsums over \(|z| = q^j\) (\(j > -m\)). 
In these subsums we replace \(z\) by \(z(1+t)\) with \(t \in \mathfrak{p}\) (note that this does not affect the sum). Averaging over \(t\in\mathfrak{p}\), we obtain
\begin{align*}
&\sum_{z \in F/ \mathfrak{p}^m - \mathfrak{p}/\mathfrak{p}^m} (\omega^\prime \mu)\big(\widetilde{h}_\alpha(-\nu z^{-1}) \eta \eta^{\prime -1}\big) \overline{e}_0(z) q^{-m} |z|^{-2} |\nu|\\
=\;&\sum_{z \in F/ \mathfrak{p}^m - \mathfrak{p}/\mathfrak{p}^m} (\omega^\prime \mu)\big(\widetilde{h}_\alpha(-\nu z^{-1}) \eta \eta^{\prime -1}\big) \overline{e}_0(z) q^{-m} |z|^{-2} |\nu|
\frac{1}{\mathrm{meas}(\mathfrak{p})}\int_{\mathfrak{p}}\overline{e}_0(zt)\,dt.
\end{align*}
Since the conductor of \(e_0\) is \(O\),
\[
\frac{1}{\mathrm{meas}(\mathfrak{p})}\int_{\mathfrak{p}}\overline{e}_0(zt)\,dt
=\begin{cases}
1,& |z|\leq q,\\
0,& |z|>q.
\end{cases}
\]
Therefore the original summation with respect to $z$ vanishes unless \(|z| \leq q\).

The terms remaining are those with \(|z| = q\), which yield
\[
\sum_{\substack{z \in \mathfrak{p}^{-1} / O \\ z \neq 0}} (\omega^\prime \mu)\big(\widetilde{h}_\alpha(-\nu z^{-1}) \eta \eta^{\prime -1}\big) \overline{e}_0(z) q^{-2} |\nu|
\]
and those with \(|z| \leq 1\), namely
\[
B(m) = \sum_{\substack{z \in O / \mathfrak{p}^m \\ z \neq 0}} (\omega^\prime \mu)\big(\widetilde{h}_\alpha(-\nu z^{-1}) \eta \eta^{\prime -1}\big) q^{-m} |z|^{-2} |\nu|
\]

We emphasize that in these last two sums there is an additional restriction on \(z\) and the argument of \((\omega^\prime \mu)\) should lie in \(\widetilde{T}_*\).

Hence \(\tau(s, \omega^\prime, \eta, \eta^\prime)\) is the sum of
\[
\sum_{\substack{z \in \mathfrak{p}^{-1} / O \\ z \neq 0}} (\omega^\prime \mu)\big(\widetilde{h}_\alpha(-\nu z^{-1}) \eta \eta^{\prime -1}\big) \overline{e}_0(z) q^{-2} \lvert \nu \rvert
\]
and \(A(m) + B(m)\) for \(m\) sufficiently large. Thus \(A(m) + B(m)\) is constant, as a function of \(m\), for \(m\) sufficiently large.

\subsection{Simplifying via Recursion Property}
Using the asymptotic behaviour of $A(m)+B(m)$, we may express it via $B(m)$ and $\omega^n_\alpha$ by noting that there are recursion 
properties of $A(m)$ and $B(m)$. \\
If in the integral defining \(A(m + n)\) we replace \(\xi\) by \(\varpi^{-n} \xi_1\), we see that for all \(m\)
\begin{align*}
    A(m + n) =& \int_{\operatorname{ord}(\xi) \leq \operatorname{ord}(\nu)-m-n} (\omega^\prime \mu)(\widetilde{h}_\alpha(\xi)\eta \eta^{\prime -1})d\xi \\
             =& \int_{\operatorname{ord}(\xi_1) \leq \operatorname{ord}(\nu)-m} (\omega^\prime \mu)(\widetilde{h}_\alpha(\varpi^{-n}\xi_1)\eta \eta^{\prime -1}) |\varpi^{-n}|d\xi_1 \\
             =& \omega_\alpha^n(\varpi)  \int_{\operatorname{ord}(\xi_1) \leq \operatorname{ord}(\nu)-m} (\omega^\prime \mu)(\widetilde{h}_\alpha(\xi_1)\eta \eta^{\prime -1}) \mu(\mathrm{diag} \{\mathbf{I},\Bigl(
             \begin{matrix}
                \varpi^n & 0 \\
                0     & \varpi^{-n}
             \end{matrix}
             \Bigr),\mathbf{I}\}) |\varpi^{-n}|d\xi_1 \\
             =&\omega_\alpha^n(\varpi) A(m).
\end{align*}
Clearly we have $A(m) = (\omega_\alpha^n(\varpi))^{m/n-1} A(n)$ if \(m \geq 0\), \(m \equiv 0 \pmod{n}\). 
Moreover one has
\begin{align*}
    B(m + n) =& \sum_{\substack{z \in O / \mathfrak{p}^{m+n} \\ z \neq 0}} (\omega^\prime \mu)(\widetilde{h}_\alpha(-\nu z^{-1}) \eta \eta^{\prime -1})q^{-m-n} |z|^{-2} |\nu| \\
             =& \int_{O - \mathfrak{p}^{m+n}} (\omega^\prime \mu) (\widetilde{h}_\alpha(-\nu z^{-1}) \eta \eta^{\prime -1}) |z|^{-2} |\nu| dz\\
             =& \int_{O - \mathfrak{p}^m} (\omega^\prime \mu)(\widetilde{h}_\alpha(-\nu z^{-1}) \eta \eta^{\prime -1}) |z|^{-2} |\nu| dz + \int_{\mathfrak{p}^m - \mathfrak{p}^{m+n}} (\omega^\prime \mu)(\widetilde{h}_\alpha(-\nu z^{-1}) \eta \eta^{\prime -1}) |z|^{-2} |\nu| dz \\
\end{align*}
In the second term here we replace \(z\) by \(\varpi^m z_1\) and obtain
\begin{align*}
    B(m + n) =& B(m) + \int_{O - \mathfrak{p}^{n}} (\omega^\prime \mu) (\widetilde{h}_\alpha(-\nu \varpi^{-m} z_1^{-1}) \eta \eta^{\prime -1}) |z_1|^{-2} q^m |\nu| dz_1\\
             =& B(m) + \omega_\alpha^n(\varpi^{m/n}) \int_{O - \mathfrak{p}^{n}} (\omega^\prime \mu) (\widetilde{h}_\alpha(-\nu z_1^{-1}) \eta \eta^{\prime -1}) |z_1|^{-2} 
             \mu 
             (\mathrm{diag}\{\mathbf{I},\Bigl(
                \begin{matrix}
                    \varpi^m & 0 \\
                    0     & \varpi^{-m}
                \end{matrix}\Bigr), \mathbf{I}\} )
                q^m |\nu| dz_1 \\
             =& B(m) + \omega_\alpha^n(\varpi^{m/n}) B(n)
\end{align*}
Applying summation over $m=kn$, we have
\[
    B(m) = B(n) \frac{1-(\omega_\alpha^n(\varpi))^{m/n}}{1- \omega_\alpha^n(\varpi)}
\]
here $m \geq 0$, \(m \equiv 0 \pmod{n}\). \\
From these it follows firstly that for large \(m\), \(m \equiv 0 \pmod{n}\), one has
\[
    A(m+n)+B(m+n) = \omega_\alpha^n(\varpi) A(m) + B(m) + \omega_\alpha^n(\varpi^{m/n}) B(n) = A(m) + B(m)
\]
Thus one has, for some large $m$,
\[
    (\omega_\alpha^n(\varpi))^{m/n-1} A(n)=A(m) = \omega_\alpha^n(\varpi^{m/n}) \left( 1 - \omega_\alpha^n(\varpi) \right)^{-1} B(n)
\]
and so
\[
    A(n) = (1- \omega_\alpha^n(\varpi))^{-1} \omega_\alpha^n(\varpi)B(n)
\]
At last, combining the equations above, for any $m \geq 0$, $m \equiv 0 \pmod{n}$, we deduce
\[
\begin{aligned}
    A(m) + B(m) &= (1- \omega_\alpha^n(\varpi))^{-1} B(n) \\
\end{aligned}
\]

\subsection{Final Calculations}
Therefore \(\tau(s, \omega^\prime, \eta, \eta^\prime)\) is the sum of
\[
    \sum_{\substack{z \in \mathfrak{p}^{-1} / O \\ z \neq 0}} (\omega^\prime \mu)\big(\widetilde{h}_\alpha(-\nu z^{-1}) \eta \eta^{\prime -1}\big) \overline{e}_0(z) q^{-2} |\nu|
\]
and
\begin{equation*}
    \left( 1 - \omega_\alpha^n(\varpi) \right)^{-1} \sum_{\substack{z \in O / \mathfrak{p}^n \\ z \neq 0}} (\omega^\prime \mu)\big(\widetilde{h}_\alpha(-\nu z^{-1}) \eta \eta^{\prime -1}\big) q^{-n} |z|^{-2} |\nu| \tag*{$(\ast)$}
\end{equation*}

Now we calculate them respectively. In the first term set \(z = y/\varpi\). 
Take \(\eta\) to be of the form \(\varpi^{\mathbf{f}}\). Under the assumption made on \(\omega^\prime\) the sum becomes
\[
    \sum_{\substack{y \in O / \mathfrak{p} \\ y \neq 0}} (\omega^\prime \mu)\big(\widetilde{h}_\alpha(-\nu \varpi) \eta \eta^{\prime -1}\big) \varepsilon(\nu \varpi, y) \overline{e}_0(y/\varpi) q^{-2} |\nu|
\]

Clearly the class of \(\eta^\prime\) in \(\widetilde{T}_\ast \backslash \widetilde{T}\) is determined and we see that it is the class of \(\varpi^{s[\mathbf{f}]}\) by the description of the equivalence classes. Thus $\tau_2$ vanishes outside this class. To calculate its value at $\varpi^{s[\mathbf f]}$, take $\eta^\prime=\varpi^{s[\mathbf f]}$.
If now
\[
    J = f_{i+1} - f_i - 1
\]
then the sum is
\[
    q^{-2} (\omega^\prime \mu)\big(\widetilde{h}_\alpha(-\nu \varpi) \eta \eta^{\prime -1}\big) g^{(-J), \overline{e}_0} \lvert \nu \rvert,
\]
here
\[
    g^{(I),e_0} = q \int_{O^\times} (\varpi,x)^I e_0(x/ \varpi) d^\times x, \enspace I \in \mathbb{Z}/n \mathbb{Z}
\]
Using the formula in Theorem~\ref{cocycle}, and after complicated computations at the end of this section, we have
\[
    (\omega^\prime \mu) (\widetilde{h}_\alpha(-\nu \varpi) \eta \eta^{\prime -1}) = (\omega^\prime \mu)\Bigl(
    \mathrm{diag}\{\mathbf{I},\Bigl(\begin{matrix}
        1 & 0 \\
        0 & -1
    \end{matrix}\Bigr),\mathbf{I}\}, (\varpi,\varpi)^{f_i f_{i+1}-f_{i+1}^2+f_{i+1}}\Bigr) 
\]
which is just
\[
    \theta \cdot \varepsilon((\varpi,\varpi))^{f_i f_{i+1}-f_{i+1}^2+f_{i+1}}
\]
where $\theta = \omega^\prime(\kappa^\ast(\mathrm{diag}\{1,\cdots,1,-1,1,\cdots,1\}))$ with $-1$ lying at the ($i+1$)-th position. 
Note that we assumed $\lvert n \rvert= 1$, so using the formula \cite[Chapter~V, Proposition~3.4]{neukirch2013algebraic} for the tame Hilbert symbol, we may simplify $(\varpi,\varpi)$.\\
Now consider the second term. It is invariant under replacing \(z\) by \(zu\) (\(u \in O^\times\)), and we apply the same trick by summing over $u \in O^\times/(1+ \mathfrak{p})$, that is
\[
    (q-1) \times (\ast) =  \left( 1 - \omega_\alpha^n(\varpi) \right)^{-1} \sum_{\substack{z \in O / \mathfrak{p}^n \\ z \neq 0}} (\omega^\prime \mu)\big(\widetilde{h}_\alpha(-\nu z^{-1}) \eta \eta^{\prime -1}\big) q^{-n} |z|^{-2} |\nu| \Bigl(\sum_{u \in O^\times/(1+ \mathfrak{p})} (\nu z^{-1}, u)\Bigr)
\]
and from this one sees that the only terms which contribute are those with 
\[(\nu z^{-1}, -) \enspace \textrm{trivial} \Leftrightarrow \operatorname{ord}(\nu z^{-1}) \equiv 0 \pmod{n}\]
We may assume that $0 \leq \operatorname{ord}(z) < n$. Since $\nu = \varpi^{f_i - f_{i+1}}$, we may choose \[z = \varpi^{f_i - f_{i+1}-n \lfloor \frac{f_i - f_{i+1}}{n} \rfloor}\]
Recall that the sum vanishes unless $\widetilde{h}_\alpha(-\nu z^{-1})\eta \eta^{\prime -1} \in \widetilde{T}_\ast$, and we may choose $\eta^\prime = \eta$ to meet this. 
Also, this is the only choice if the sum does not vanish.
Thus we can restrict our attention to the case \(\eta^\prime = \eta\), and here, letting $k$ be the $\varpi$-adic valuation of our chosen $z$, we obtain
\begin{align*}
    &\left( 1 - \omega_\alpha^n(\varpi) \right)^{-1} \int_{|z'| = q^{-k}} (\omega^\prime \mu)\big(\widetilde{h}_\alpha(-\nu z^{\prime -1}) \big) q^{2k} |\nu| dz' \\
    =&\left( 1 - \omega_\alpha^n(\varpi) \right)^{-1} (q^{-k}-q^{-(k+1)}) (\omega^\prime \mu)\big(\widetilde{h}_\alpha(-\nu z^{-1}) \big) q^{2k} |\nu| \\
    =&\theta\left( 1 - \omega_\alpha^n(\varpi) \right)^{-1} (q^{-k}-q^{-(k+1)}) \omega_\alpha^n(\varpi^{- \lfloor \frac{f_i - f_{i+1}}{n} \rfloor})  \\ &\times \mu(\mathrm{diag}\{\mathbf{I},\Bigl(
        \begin{smallmatrix}
            \varpi^{-n \lfloor \frac{f_i - f_{i+1}}{n} \rfloor} & 0 \\
            0 & \varpi^{n \lfloor \frac{f_i - f_{i+1}}{n} \rfloor}
        \end{smallmatrix}
        \Bigr),\mathbf{I}\})q^{2k} |\nu| \\
    =& \theta \left( 1 - \omega_\alpha^n(\varpi) \right)^{-1}(1-q^{-1}) \omega_\alpha^n(\varpi^{-\lfloor \frac{f_i - f_{i+1}}{n} \rfloor})
\end{align*}
Thus our calculation completes.\\
\subsection{Cocycle Calculations}
For the completeness of the proof, we calculate the second factor above.
Recall that 
\begin{itemize}
    \item $\eta= \mathbf{s}(\mathrm{diag}\{\varpi^{f_1},\cdots,\varpi^{f_r}\})$
    \item $\eta^\prime=\mathbf{s}(\mathrm{diag}\{\varpi^{f_1},\cdots,\varpi^{f_{i-1}},\varpi^{f_{i+1}-1},\varpi^{f_{i}+1},\varpi^{f_{i+2}},\cdots,\varpi^{f_r}\})$
    \item $\nu= \varpi^{f_i-f_{i+1}}$
\end{itemize}
We calculate $\eta^{\prime -1}$ first. Put
\[
t=\mathrm{diag}\{\varpi^{f_1},\ldots,\varpi^{f_r}\},
\qquad
t'=\mathrm{diag}\{\varpi^{f_1},\ldots,\varpi^{f_{i+1}-1},
\varpi^{f_i+1},\ldots,\varpi^{f_r}\}.
\]
Then
\[
\eta=(t,1),\qquad
\eta'=(t',1),\qquad
\eta^{\prime -1}=(t^{\prime -1},x),
\qquad
x=\sigma(t',t^{\prime -1})^{-1}.
\]
Therefore
\begin{align*}
\eta\eta^{\prime -1}
&=(tt^{\prime -1},A),\\
A
&=x\,\sigma(t,t^{\prime -1})
=\frac{\sigma(t,t^{\prime -1})}
{\sigma(t',t^{\prime -1})}.
\end{align*}
The restriction of the cocycle to the diagonal torus is bimultiplicative. Hence
\[
A=\sigma(t't^{-1},t').
\]
Set
\[
J=f_{i+1}-f_i-1.
\]
Then
\[
t't^{-1}
=\mathrm{diag}\{\mathbf{I},\varpi^J,\varpi^{-J},\mathbf{I}\}.
\]
Applying Theorem~\ref{cocycle}, the factors involving positions outside $i$ and $i+1$ cancel, and we obtain
\begin{align*}
A
={}&(\varpi^J,\varpi^{f_{i+1}-1})^c
(\varpi^{-J},\varpi^{f_i+1})^c\\
&\times
(\varpi^J,\varpi^{f_i+1})^{c+1}
(\varpi^{-J},\varpi^{f_{i+1}-1})^c\\
={}&(\varpi^J,\varpi^{f_i+1})\\
={}&(\varpi^{f_{i+1}-f_i-1},\varpi^{f_i+1}).
\end{align*}
Thus
\begin{align*}
\widetilde{h}_\alpha (-\nu \varpi) \eta \eta^{\prime -1}
={}&\Bigl(\mathrm{diag}\{\mathbf{I},\nu^{-1}\varpi^{-1},-\nu \varpi,\mathbf{I}\},1\Bigr)\\
&\times
\Bigl(\mathrm{diag}\{\mathbf{I},\varpi^{-J},\varpi^J,\mathbf{I}\},A\Bigr).
\end{align*}
Now return to $\widetilde{h}_\alpha (-\nu \varpi) \eta \eta^{\prime -1}$, use Theorem~\ref{cocycle} again, we have
\begin{align*}
    \widetilde{h}_\alpha (-\nu \varpi) \eta \eta^{\prime -1}=
    &\Bigl(\mathrm{diag}\{\mathbf{I},1,-1,\mathbf{I}\},\sigma \Big(\Bigl(\begin{smallmatrix}
        \nu^{-1}\varpi^{-1} & 0\\
        0 & -\nu \varpi
    \end{smallmatrix}\Bigr),
    \Bigl(\begin{smallmatrix}
        \varpi^{1-f_{i+1}+f_i} & 0\\
        0 & \varpi^{-1+f_{i+1}-f_i}
    \end{smallmatrix}\Bigr)
    \Big) \cdot A\Bigr) \\
    =& \Bigl(\mathrm{diag}\{\mathbf{I},1,-1,\mathbf{I}\},
    (\varpi,\varpi)^{J^2}A\Bigr)\\
    =& \Bigl(\mathrm{diag}\{\mathbf{I},1,-1,\mathbf{I}\},
    (\varpi,\varpi)^{f_i f_{i+1}-f_{i+1}^2+f_{i+1}}\Bigr)
\end{align*}
where in the last equality we use $(\varpi,\varpi)^2=1$.
This completes the calculation.

\section{Comparison with the Formula in [Gao17]} \label{compare}
We compare \cite[Corollary~3.7]{GaoFan2017Dtrf} with Theorem~\ref{formula}, assuming throughout that $F$ has characteristic zero, $\lvert n\rvert=1$, and $\omega^\prime$ is the canonical unramified extension of its central restriction. Thus, in addition to the assumptions of Theorem~\ref{formula}, we require $\omega^\prime(\kappa^\ast(t))=1$ for $t\in T\cap K$, where $K=\mathrm{GL}_r(O)$.

Let $Y=\mathbb Z^r$ be the cocharacter lattice with standard basis $e_1,\ldots,e_r$ and simple coroots $\alpha_i^\vee=e_i-e_{i+1}$. For the twist parameter $0\leq c<n$, choose the bisector and quadratic form
\[
D(e_i,e_j)=\begin{cases}c+1,&i<j,\\c,&i\geq j,\end{cases}
\qquad B_Q=D+D^{\mathsf T},\qquad Q(y)=D(y,y),
\]
and take $\eta$ to be trivial on the coroot lattice. Then $Q(\alpha_i^\vee)=-1$ and $n_{\alpha_i}=n$. By \cite[Example~3.16]{GaoFan2017Dtrf}, these data give the Kazhdan--Patterson cover $\widetilde G$; the torus cocycle agrees with Theorem~\ref{cocycle}.

Choose the simple root pinnings
\[
e_{\alpha_i}(x)=\mathbf I-xE_{i,i+1},\qquad
e_{-\alpha_i}(x)=\mathbf I-xE_{i+1,i},
\]
where $E_{a,b}$ is the standard matrix unit. By the rank-two cocycle in \cite[p.~41]{KP84} and block compatibility, 
the natural coroot lift is $\overline h_{\alpha_i}(a)=\mathbf s(\alpha_i^\vee(a))$, 
consistent with \cite[equation~(2), p.~340]{GaoFan2017Dtrf}.
For $1\leq i<r$, let $s_i=(i,i+1)$ and put
\[
s_i^{\prime}=e_{\alpha_i}(1)e_{-\alpha_i}(-1)e_{\alpha_i}(1).
\]
Fix a reduced expression $w_0=s_{i_1}\cdots s_{i_\ell}$ and define
\[
w_0^{\prime}=s_{i_1}^{\prime}\cdots s_{i_\ell}^{\prime},\qquad
\widetilde w_0=\kappa^\ast(w_0^{\prime}).
\]
Multiplying these representatives and using the compact splitting \cite[Proposition~0.1.3(i),(ii), p.~44]{KP84} gives
\[
(\widetilde w_0)^{-1}=\widetilde d_0w_0^{-1},\qquad
\widetilde d_0=\kappa^\ast(d_0),\qquad
d_0=\operatorname{diag}\{1,-1,\ldots,(-1)^{r-1}\}.
\]
Taking Gao's scalar additive character to be $\psi=e_0^{-1}$ then gives $\psi_U=e$ by \cite[\S~3A, p.~344]{GaoFan2017Dtrf}.

Put $\epsilon=\varepsilon((-1,\varpi))$ and write $[t,u]=tut^{-1}u^{-1}$. The torus commutator formula gives
\[
\mathbf v=\sum_{j=1}^r(j-1)e_j,\qquad
\varepsilon([\varpi^{\mathbf f},\widetilde d_0])
=\epsilon^{B_Q(\mathbf f,\mathbf v)}.
\]
Since $\widetilde d_0\in\widetilde T_\ast$ has order dividing two, $[t,\widetilde d_0]^2=1$ for all $t\in\widetilde T$, and $[t,\widetilde d_0]=1$ for $t\in\widetilde T_\ast$.

Under induction in stages, put $\Phi_f(g)(t)=\mu(p(t))^{-1}f(tg)$. Following \cite[\S~3A, pp.~343--344]{GaoFan2017Dtrf}, Gao's Whittaker functional is defined by
\[
\langle\lambda_\eta^{\mathrm{Gao}},f\rangle:=\left(\int_{N_+^\ast}\Phi_f(\widetilde w_0^{-1}n)\overline{e(n)}\,dn\right)(\eta^{-1}),\qquad f\in V(\omega^\prime),\ \eta\in\widetilde T.
\]
His coefficient function $C:\widetilde T\to\mathbb C$ satisfies
\[
C(th)=C(t)\omega^\prime(h)\qquad(t\in\widetilde T,\ h\in\widetilde T_\ast)
\]
and parametrizes the Whittaker functional
\[
\lambda^{\mathrm{Gao}}(C):=\sum_{\eta\in\widetilde T/\widetilde T_\ast}C(\eta)\lambda_\eta^{\mathrm{Gao}}.
\]
Using $\widetilde w_0^{-1}=\widetilde d_0w_0^{-1}$, we obtain
\[
\begin{aligned}
\langle\lambda_\eta^{\mathrm{Gao}},f\rangle
&=\mu(p(\eta))\int_{N_+^\ast}\overline{e(n)}
f(\eta^{-1}\widetilde d_0w_0^{-1}n)\,dn\\
&=\mu(p(\eta))\varepsilon([\eta^{-1},\widetilde d_0])
\langle\lambda_{\eta^{-1}},f\rangle.
\end{aligned}
\]
Here we move $\widetilde d_0$ to the left and use $\omega^\prime(\widetilde d_0)=\mu(d_0)=1$. Comparing coefficients in $\lambda^{\mathrm{Gao}}(C)=\lambda(\mathbf c)$ therefore gives
\[
C(t^{-1})=\mu(p(t))\varepsilon([t,\widetilde d_0])^{-1}\mathbf c(t).
\]
This relation preserves equivariance.
For the chosen uniformizer sections, the central factor $(\varpi^{-\mathbf f})^{-1}=\zeta_{\mathbf f}\varpi^{\mathbf f}$, with $\zeta_{\mathbf f}\in\mu_n$, therefore gives
\[
    C(\varpi^{-\mathbf f})
    =\mu(\varpi^{\mathbf f})\epsilon^{-B_Q(\mathbf f,\mathbf v)}\varepsilon(\zeta_{\mathbf f})^{-1}
    \mathbf c(\varpi^{\mathbf f}).
\]

Let $\rho$ be half the sum of the positive coroots and $y_\rho=y-\rho$. For a simple root $\alpha$, Gao's notation specializes to
\[
\begin{aligned}
w_\alpha[y]&=w_\alpha(y-\rho)+\rho=w_\alpha(y)+\alpha^\vee,
&k_{y,\alpha}&=\left\lceil\frac{\langle y,\alpha\rangle}{n}\right\rceil,\\
\Gamma(y,\alpha^\vee)&=\epsilon^{\langle y_\rho,\alpha\rangle D(y,\alpha^\vee)},
&g_{e_0}(I)&=q^{-1}g^{(-I),e_0}.
\end{aligned}
\]
By \cite[Corollary~3.7]{GaoFan2017Dtrf}, we have $\lambda^{\mathrm{Gao}}(C)\in\mathrm{Wh}(V_0(\omega^\prime))$ if and only if
\begin{equation}\label{eq:gao_formula}
C(\varpi^{w_\alpha[y]})=q^{k_{y,\alpha}-1}\Gamma(y,\alpha^\vee)
\left[g_{e_0}(\langle y_\rho,\alpha\rangle Q(\alpha^\vee))\right]^{-1}C(\varpi^y)
\end{equation}
for every simple root $\alpha$ and $y\in Y$. The basis conversion identifies $y=-\mathbf f$. We now compare this formula with Theorem~\ref{formula} for $\alpha=(i,i+1)$.

\textbf{1. The Inner Product and Weyl Action:} \\
The standard pairing evaluates to $\langle y, \alpha \rangle = \langle -\mathbf{f}, \alpha \rangle = f_{i+1} - f_i$. 
Consequently, Gao's shifted inner product is $\langle y_\rho, \alpha \rangle = f_{i+1} - f_i - 1$. 
Applying Gao's shifted Weyl action to our inverted coordinate gives $w_\alpha[-\mathbf{f}] = w_\alpha(-\mathbf{f}) + \alpha^\vee = -(f_1, \dots, f_{i+1}-1, f_i+1, \dots, f_r)$. This is exactly $-s[\mathbf{f}]$. 

\textbf{2. The $q$-Exponent:} \\
We have $k_{y,\alpha} = \lceil \frac{f_{i+1} - f_i}{n} \rceil$. Using the identity $\lceil X \rceil = -\lfloor -X \rfloor$, this bound becomes $-\lfloor \frac{f_i - f_{i+1}}{n} \rfloor$. Gao's base exponent $q^{k_{y,\alpha}-1}$ thus translates to $q^{-\lfloor \frac{f_i - f_{i+1}}{n} \rfloor - 1}$.

Next, we substitute the relation between $C$ and $\mathbf c$ into \eqref{eq:gao_formula}. This multiplies the right-hand side by
\[
\frac{\mu(\varpi^{\mathbf f})}{\mu(\varpi^{s[\mathbf f]})}
\epsilon^{B_Q(s[\mathbf f]-\mathbf f,\mathbf v)}
\frac{\varepsilon(\zeta_{s[\mathbf f]})}{\varepsilon(\zeta_{\mathbf f})}
\]
with $C$ replaced with $\mathbf{c}$. The power of $\epsilon$ is a sign, whose cancellation with the Gauss-sum sign is checked next.
For $\mathrm{GL}_r$, the modular character on a diagonal element is $\mu(\varpi^{\mathbf{f}}) = q^{-\sum_{j=1}^r \frac{r+1-2j}{2} f_j}$. 
Direct calculations show that the exponent of $q$ should be
\[
    \left(-\lfloor \frac{f_i - f_{i+1}}{n} \rfloor - 1\right) + (f_{i+1} - f_i - 1) = f_{i+1} - f_i - \lfloor \frac{f_i - f_{i+1}}{n} \rfloor - 2
\]
This identically matches the $q$-exponent in Theorem \ref{formula}.

\textbf{3. The Gauss Sum:} \\
Since $\psi=e_0^{-1}$, Gao's factor $g_{\psi^{-1}}$ is $g_{e_0}$. The change of variable $x\mapsto-x$ gives
\[
g^{(f_i-f_{i+1}+1),\overline e_0}=\epsilon^{f_i-f_{i+1}+1} g^{(f_i-f_{i+1}+1),e_0}.
\]
Together with $g^{(i),e_0}g^{(-i),\overline e_0}=q$ for $n\nmid i$, this yields
\[
    \left[g_{e_0}\bigl(\langle y_\rho,\alpha\rangle Q(\alpha^\vee)\bigr)\right]^{-1}
    =\begin{cases}
    \epsilon^{f_i-f_{i+1}+1} g^{(f_i-f_{i+1}+1),e_0},&n\nmid(f_i-f_{i+1}+1),\\
    -q,&n\mid(f_i-f_{i+1}+1).
    \end{cases}
\]
On the other hand, our bilinear form satisfies
\[
B_Q(x,z)=(2c+1)\Bigl(\sum_jx_j\Bigr)\Bigl(\sum_jz_j\Bigr)-\sum_jx_jz_j,
\qquad B_Q(\alpha_i^\vee,\mathbf v)=1.
\]
As $s[\mathbf f]-\mathbf f=(f_{i+1}-f_i-1)\alpha_i^\vee$, we obtain
\[
\epsilon^{B_Q(s[\mathbf f]-\mathbf f,\mathbf v)}=\epsilon^{f_{i+1}-f_i-1}.
\]
Thus this factor cancels the sign $\epsilon^{f_i-f_{i+1}+1}$ when $n\nmid(f_i-f_{i+1}+1)$. If $n\mid(f_i-f_{i+1}+1)$, both signs equal $1$. Consequently,
\[
\frac{\epsilon^{B_Q(s[\mathbf f]-\mathbf f,\mathbf v)}}{g_{e_0}\bigl(\langle y_\rho,\alpha\rangle Q(\alpha^\vee)\bigr)}
=\begin{cases}
g^{(f_i-f_{i+1}+1),e_0},&n\nmid(f_i-f_{i+1}+1),\\
q\,g^{(f_i-f_{i+1}+1),e_0},&n\mid(f_i-f_{i+1}+1).
\end{cases}
\]

\textbf{4. The Factor $\Gamma(y,\alpha^\vee)$:} \\
To evaluate $\Gamma(y, \alpha^\vee)$, we use the bisector $D$ parameterized previously. For $y = -\mathbf{f}$, we have $D(-\mathbf{f}, \alpha^\vee) = f_i$. 
Then the exponent of $\epsilon$ becomes:
\[
    \langle y_\rho, \alpha \rangle D(-\mathbf{f}, \alpha^\vee)
    =f_i(f_{i+1}-f_i-1)
    =f_if_{i+1}-f_i(f_i+1)
\]
Since $f_i(f_i+1)$ is even and $\epsilon=\pm 1$, it reduces to $\epsilon^{f_i f_{i+1}}$. 
If we further assume $\lvert n\rvert=1$, using the formula for the tame Hilbert symbol \cite[Chapter~V, Proposition~3.4]{neukirch2013algebraic}, we have $\epsilon=\varepsilon(\delta)$.
\[
    \frac{\varepsilon(\zeta_{s[\mathbf f]})}{\varepsilon(\zeta_{\mathbf f})}
    =\varepsilon((\varpi,\varpi))^{f_{i+1}-f_i-1}
    =\varepsilon(\delta)^{f_{i+1}-f_i-1},
\]
\[
    \Gamma(-\mathbf f,\alpha^\vee)
    \frac{\varepsilon(\zeta_{s[\mathbf f]})}{\varepsilon(\zeta_{\mathbf f})}
    =\varepsilon(\delta)^{(f_{i+1}-1)(f_i+1)}.
\]
Combining these identities, we recover the formula in Theorem~\ref{formula} when $n\nmid f_i-f_{i+1}+1$.
If $n\mid f_i-f_{i+1}+1$, put $m=(f_i-f_{i+1}+1)/n$. Equivariance and the translated Gao relation give, respectively,
\[
\mathbf c(\varpi^{s[\mathbf f]})
=q^{-(n+1)m}\mathbf c(\varpi^{\mathbf f}),
\qquad
\mathbf c(\varpi^{s[\mathbf f]})
=-q^{1-(n+1)m}\mathbf c(\varpi^{\mathbf f}).
\]
Hence $\mathbf c(\varpi^{\mathbf f})=0$, the same vanishing condition obtained in the proof of Theorem~\ref{formula}. Under the assumptions of this section, Gao's criterion is therefore equivalent to that of Theorem~\ref{formula}.
Thus we see that under the framework of Brylinski-Deligne covering groups, the cocycle calculations are greatly simplified.

\printbibliography

@article{KP84,
  author    = {Kazhdan, David A. and Patterson, S. J.},
  title     = {Metaplectic forms},
  journal   = {Publications Math\'ematiques de l'IH\'ES},
  volume    = {59},
  pages     = {35--142},
  year      = {1984},
  publisher = {Institut des Hautes \'Etudes Scientifiques},
  doi       = {10.1007/BF02698770},
  mrnumber  = {743816},
  zbl       = {0559.10026},
  url       = {https://www.numdam.org/articles/10.1007/BF02698770/}
}

@book{neukirch2013algebraic,
  author     = {Neukirch, J{\"u}rgen},
  translator = {Schappacher, Norbert},
  title      = {Algebraic Number Theory},
  series     = {Grundlehren der mathematischen Wissenschaften},
  number     = {322},
  publisher  = {Springer},
  location   = {Berlin},
  year       = {1999},
  isbn       = {978-3-540-65399-8},
  doi        = {10.1007/978-3-662-03983-0},
  url        = {https://link.springer.com/book/10.1007/978-3-662-03983-0}
}

@article{Banks1999BlockcompatibleMC,
  author  = {Banks, William D. and Levy, Jason and Sepanski, Mark R.},
  title   = {Block-compatible metaplectic cocycles},
  journal = {Journal f{\"u}r die reine und angewandte Mathematik},
  volume  = {507},
  pages   = {131--163},
  year    = {1999},
  doi     = {10.1515/crll.1999.507.131},
  url     = {https://doi.org/10.1515/crll.1999.507.131}
}

@misc{cai2025coperiodscentralsymmetriccube,
  author        = {Cai, Li and Fan, Yangyu and She, Dongming},
  title         = {Co-periods and central symmetric cube {L}-values},
  year          = {2025},
  eprint        = {2507.15279},
  archiveprefix = {arXiv},
  primaryclass  = {math.NT},
  url           = {https://arxiv.org/abs/2507.15279}
}

@book{Kub69,
  author    = {Kubota, Tomio},
  title     = {On Automorphic Functions and the Reciprocity Law in a Number Field},
  series    = {Lectures in Mathematics},
  number    = {2},
  publisher = {Kinokuniya},
  location  = {Tokyo},
  year      = {1969},
  pagetotal = {65},
  url       = {https://ci.nii.ac.jp/ncid/BA06711865}
}

@article{BD01,
  author    = {Brylinski, Jean-Luc and Deligne, Pierre},
  title     = {Central extensions of reductive groups by {$K_2$}},
  journal   = {Publications Math\'ematiques de l'IH\'ES},
  volume    = {94},
  pages     = {5--85},
  year      = {2001},
  publisher = {Institut des Hautes \'Etudes Scientifiques},
  doi       = {10.1007/s10240-001-8192-2},
  mrnumber  = {1896177},
  zbl       = {1093.20027},
  url       = {https://www.numdam.org/articles/10.1007/s10240-001-8192-2/}
}

@article{GaoFan2017Dtrf,
  author   = {Gao, Fan},
  title    = {Distinguished theta representations for certain covering groups},
  journal  = {Pacific Journal of Mathematics},
  volume   = {290},
  number   = {2},
  pages    = {333--379},
  year     = {2017},
  doi      = {10.2140/pjm.2017.290.333}
}

@article{Mc11,
  author  = {McNamara, Peter J.},
  title   = {The metaplectic Casselman--Shalika formula},
  journal = {Transactions of the American Mathematical Society},
  volume  = {368},
  number  = {4},
  pages   = {2913--2937},
  year    = {2016},
  doi     = {10.1090/tran/6597}
}

@article{Kub67,
  author  = {Kubota, Tomio},
  title   = {Topological covering of {$\mathrm{SL}(2)$} over a local field},
  journal = {Journal of the Mathematical Society of Japan},
  volume  = {19},
  number  = {1},
  pages   = {114--121},
  year    = {1967},
  doi     = {10.2969/jmsj/01910114}
}

@article{Ginzburg2001NonvanishingOT,
  author  = {Ginzburg, David and Jiang, Dihua and Rallis, Stephen},
  title   = {Nonvanishing of the central critical value of the third symmetric power {L}-functions},
  journal = {Forum Mathematicum},
  volume  = {13},
  number  = {1},
  pages   = {109--132},
  year    = {2001},
  doi     = {10.1515/form.2001.001}
}

@book{SY,
  author    = {Sakellaridis, Yiannis and Venkatesh, Akshay},
  title     = {Periods and Harmonic Analysis on Spherical Varieties},
  series    = {Ast\'erisque},
  number    = {396},
  publisher = {Soci\'et\'e Math\'ematique de France},
  location  = {Paris},
  year      = {2017},
  pagetotal = {viii+360},
  isbn      = {978-2-85629-871-8},
  doi       = {10.24033/ast.1040}
}

@article{BBL03,
  author  = {Banks, William and Bump, Daniel and Lieman, Daniel},
  title   = {{Whittaker}--{Fourier} coefficients of metaplectic {Eisenstein} series},
  journal = {Compositio Mathematica},
  volume  = {135},
  number  = {2},
  pages   = {153--178},
  year    = {2003},
  doi     = {10.1023/A:1021763918640},
  url     = {https://doi.org/10.1023/A:1021763918640}
}

\vspace{2em}
\noindent \textsc{School of Mathematical Sciences, Capital Normal University, Beijing 100048, People's Republic of China} \\
\textit{Email address:} \texttt{sihanli314@gmail.com}

\end{document}